\documentclass[12pt]{amsart}
 
\usepackage{amsmath,amssymb,amsthm,amsfonts,mathtools}
\usepackage{geometry}
\usepackage[colorlinks]{hyperref}
\usepackage{enumitem}
\usepackage{booktabs}
\usepackage{microtype}
\usepackage{tikz-cd}
\usepackage{bm}

\usepackage{verbatim}
\usepackage{url}

\newtheorem{theorem}{Theorem}[section]

\newtheorem{lemma}[theorem]{Lemma}

\newtheorem{problem}{Problem}

\newcommand{\bsigma}{\bm{\vec{\sigma}}}

\usepackage{caption}
\usepackage{array} % add to the preamble to make the table
\newcolumntype{M}[1]{>{\centering\arraybackslash}m{#1}} % add to the preamble to make the table
\title{Low Lying Zeros of Dedekind Zeta Functions}

\author{Timothy S. Trudgian}

\author{Sebastian Zuniga-Alterman}

\newcommand{\Addresses}{{
  {\footnotesize
\ \\

\noindent T.S. TRUDGIAN - \textsc{School of Science, The University of New South Wales, Canberra, Northcott Drive, Campbell ACT 2612, AUSTRALIA.}\\
  \texttt{timothy.trudgian@unsw.edu.au}\\
  
   \noindent S.~ZUNIGA-ALTERMAN - \textsc{School of Science, The University of New South Wales, Canberra, Northcott Drive, Campbell ACT 2612, AUSTRALIA.}\\
  \texttt{s.zuniga$\_$alterman@unsw.edu.au}

}}}

\thanks{Thanks are due to A. Odlyzko for insightful discussions about the topic of this article.}

\thanks{The authors are supported by the Australian Research Council Discovery Project DP240100186.}

\begin{document}

\begin{abstract}
Let $K$ be a number field of degree $n_K$, and let $h(K)$ denote the smallest absolute value of the imaginary part of a non-trivial zero of its Dedekind zeta function. In 1979, Hoffstein proved the unconditional bound $h(K)\leq  0.87+o(1)$ as $n_K\to\infty$. In a 1990 survey article, Odlyzko stated without proof that a different kernel in Weil's explicit formula could produce $h(K)\leq  0.54+o(1)$. No further improvement on Hoffstein's $0.87$ threshold accompanied by a published proof has since appeared. We choose a linear kernel and first show that Hoffstein's original parameters already yield the improved constant $0.825$. We then generalise his linear kernel method using Stechkin's trick and prove the unconditional bound $h(K)\leq  0.6+o(1)$ as $n_K\to\infty$, giving the best proved asymptotic bound to date.
\end{abstract}

\maketitle

%%%%%%%%%%%%%%%%%%%%%%%%%%%%%%%%%%%%%%%%%%%%%%%%%
%%%%%%%%%%%%%%%%%%%%%%%%%%%%%%%%%%%%%%%%%%%%%%%%%
\section{Introduction}\label{horse}
%%%%%%%%%%%%%%%%%%%%%%%%%%%%%%%%%%%%%%%%%%%%%%%%%
%%%%%%%%%%%%%%%%%%%%%%%%%%%%%%%%%%%%%%%%%%%%%%%%%

Let $K$ be a number field (a finite-degree field extension of $\mathbb{Q}$) of degree $n_K=[K:\mathbb{Q}]$, discriminant $d_K=\text{disc}(K/\mathbb{Q})$,  root discriminant $|d_K|^{1/n_K}$ and signature $(r_1,r_2)$. Thus $K$ has $r_1$
real embeddings and $2r_2$ complex embeddings. Let $\zeta_K$ denote its
Dedekind zeta function.

We define the height of the first non-trivial zero in the critical strip by
\begin{equation*}
    h(K):=\min\{|\gamma|\geq 0:\zeta_K(\beta+i\gamma)=0,0<\beta<1\}.
\end{equation*}
note that this zero may be real if $\gamma=0$.
Hoffstein \cite{Hoffstein*79} observed that Odlyzko's methods for bounding
discriminants could also be applied to the problem of low-lying zeros of
$\zeta_K$. In particular, he proved that
\begin{equation}\label{stein}
        h(K)\leq  0.87+o(1),\qquad n_K\to\infty.
\end{equation}
  
Later, Odlyzko \cite{Odlyzko*90} attributed to unpublished work
\cite{Odlyzko***} the  estimate
\begin{equation}\label{Odlyzko}
        h(K)\leq  0.54+o(1),\qquad n_K\to\infty.
\end{equation}
Neugebauer \cite{Neugebauer*88} proved that every Dedekind zeta function admits a \emph{non-real} zero of height at most $60$.

Estimates can be improved if one assumed the \emph{generalised Riemann hypothesis} (GRH), namely, that all the nontrivial zeros of $\zeta_K$ have real part equal to $1/2$. 
%and bounds for $h(K)$ under GRH are called \emph{conditional} bounds.
Let
\begin{equation*}
    \tau(K):=\min\{|\gamma|>0:\zeta_K(1/2+i\gamma)=0\}.
\end{equation*}
In other words, $\tau(K)$ is the first positive height of a zero on the
critical line. Omar \cite{Omar*00} proved, under GRH, an upper bound which for fixed degree reads as
    \begin{equation*}
    \tau(K)\ll \frac{1}{\log\log |d_K|}.
    \end{equation*}
Moreover, Kala \cite{Kala*25} recently gave an explicit version of Omar's argument, still assuming GRH.

Odlyzko's survey \cite{Odlyzko*90} on discriminants, regulators, class numbers and zeros records three relevant consequences of the explicit formula, which are stated without proof. First, under GRH, every Dedekind zeta function $\zeta_K$ has a zero on the critical line at height $O(1/\log n_K)$ as $n_K\to\infty$. Second, unconditionally, $\zeta_K$ has a zero below an absolute height $14$ for every $K$ with $n_K\geq 2$. Third is the estimate in \eqref{Odlyzko}.

%Those results are attributed to a manuscript that was not published, confirmed by Odlyzko himself \cite{Odlyzko***}. In this paper, we obtain the best up to date improvement on \eqref{Hoffstein:thm}. This is precisely Theorem \ref{main}.
We began this manuscript in an attempt to prove (and possibly to improve) the estimate in \eqref{Odlyzko}. While we could not do this, we did manage to improve on Hoffstein's bound in \eqref{stein}, which we record below.
%
%Since Odlyzko's decades-old claim \eqref{Odlyzko} was never published, any published improvement on Hoffstein's constant is of independent interest. We prove the following theorem, which is the best up to date result.
\begin{theorem}\label{main}\normalfont
\begin{equation*}
        h(K)\leq  0.6+o(1),\qquad n_K\to\infty.
\end{equation*}
\end{theorem}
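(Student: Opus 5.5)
We will argue by contradiction through Weil's explicit formula for $\zeta_K$, following Hoffstein's strategy. Fix a small parameter $T>0$. Assume that $h(K)>T$, so every nontrivial zero $\rho=\beta+i\gamma$ has $|\gamma|>T$. We then aim to show that this forces $\log|d_K|$ to be \emph{smaller} than the Odlyzko lower bound $\log|d_K|\geq n_K(\log 4\pi+\gamma_0+\tfrac{\pi}{2}\cdot\tfrac{r_1}{n_K}-o(1))$ (Minkowski--Odlyzko type bounds). For $T=0.6$ the two inequalities should be incompatible once $n_K$ is large.

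Concretely, take an even test function $F$ on $\mathbb{R}$ with $F(0)=1$, $F\geq0$ and compact support $[-L,L]$, scaled so that $F(x)=g(x/L)$. Its Fourier transform $\Phi(t)=\int F(x)e^{ixt}\,dx$ must satisfy $\operatorname{Re}\Phi(t-i(\beta-\tfrac12))\leq 0$ for all $|t|>T$ and $0<\beta<1$. Weil's explicit formula reads
\[
\sum_\rho \Phi(\rho) \;=\; \log|d_K|\,F(0)\;-\;r_1 I_1(F)\;-\;n_K I_2(F)\;-\;2\sum_{\mathfrak p,m}\frac{\log N\mathfrak p}{N\mathfrak p^{m/2}}F(m\log N\mathfrak p)\;+\;(\text{pole terms}),
\]
with $I_1,I_2$ the archimedean integrals. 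The pole terms are $O(\int F e^{x/2})$, which is bounded independently of $n_K$ for fixed $L$. If $F\geq0$, the prime sum is nonnegative and can be dropped. The sign condition on $\Phi$ makes the zero sum $\leq 0$. Dividing by $n_K$ gives $\log|d_K|/n_K \leq \frac{r_1}{n_K}I_1+I_2+O(1/n_K)$. As $L\to\infty$, the right side tends to something strictly less than Odlyzko's constant, and this yields the contradiction.

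The kernel choice follows the abstract. Take $\Phi$ of the form $\Phi(s)=\widehat{g}$ where $g$ is a linear (triangle-type) kernel in Hoffstein's sense, modified by Stechkin's trick. That is, consider $\Phi_T(t)=\Phi_0(t)-c\,\Phi_0(t/\lambda)$ (or a nonnegative combination of shifted copies $\Phi_0(t\pm T)$), with $\Phi_0$ the Fejér kernel $\bigl(\tfrac{\sin(Lt/2)}{Lt/2}\bigr)^2$-type transform. Stechkin's device subtracts a multiple so that negativity for $|t|>T$ holds in a larger range while $F(0)$ stays controlled. The parameters $(c,\lambda,L)$ will be optimised numerically. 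As a first step, I will rederive Hoffstein's computation with his parameters and check that the linear kernel already gives $0.825$, which serves as a sanity check. I will then add the Stechkin parameter and push $T$ down to $0.6$.

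The main obstacle is proving the sign condition $\operatorname{Re}\Phi(\gamma+i\delta)\leq0$ uniformly over the whole strip $|\delta|<\tfrac12$, $|\gamma|>T$, not just on the critical line. Zeros off the line (possibly real, or near $\beta$ close to $0$ or $1$) make the hyperbolic factors $\cosh(\delta x)$ enter. For $L\to\infty$ these grow like $e^{L/2}$, so I expect to keep $L$ bounded only at the end. The plan is to write $\operatorname{Re}\Phi(\gamma+i\delta)=\int F(x)\cosh(\delta x)\cos(\gamma x)\,dx$, bound the $\delta$-dependence via convexity of $\cosh$ in $\delta$, and reduce the condition to checking the two boundary lines $\delta=0$ and $\delta=\tfrac12$. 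An alternative is to handle zeros with $|\delta|$ large by a separate zero-free-type counting argument. After that reduction, the verification is a one-variable inequality in $\gamma$, which can be certified by interval arithmetic for the chosen parameters.
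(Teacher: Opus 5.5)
There is a genuine gap: you use the positivity of the prime-ideal sum in the wrong direction. In your own form of the explicit formula, the hypothesis $\Re\Phi(\rho)\le 0$ for every zero gives
\[
\log|d_K|\,F(0)\le r_1I_1(F)+n_KI_2(F)+2\sum_{\mathfrak p,m}\frac{\log N\mathfrak p}{N\mathfrak p^{m/2}}\,F(m\log N\mathfrak p)-(\text{pole terms}).
\]
So for $F\ge0$ the prime sum is a nonnegative term on the right of an \emph{upper} bound for $\log|d_K|$, and it cannot be discarded. Its nonnegativity is exactly what yields Odlyzko's \emph{lower} bounds. It is not negligible either: if $2$ splits completely in $K$, it is at least $n_K\frac{\log2}{\sqrt2}F(\log2)$, the same order as the archimedean terms. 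Hence $\log|d_K|/n_K\le\frac{r_1}{n_K}I_1+I_2+O(1/n_K)$ does not follow.

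Two further steps also fail. First, with $F(x)=g(x/L)$ one has $F\to1$ locally uniformly as $L\to\infty$. Then $r_1I_1+n_KI_2\to\frac{\pi}{2}r_1+n_K(\gamma_0+\log8\pi)$, which exceeds the lower bound you quote by $n_K\log2$. That quoted bound is itself a hybrid: unconditionally one has $r_1$ in place of $\frac{\pi}{2}r_1$. Second, convexity of $\cosh(\delta x)$ in $\delta$ does not transfer to $\int F(x)\cos(\gamma x)\cosh(\delta x)\,dx$, because $\cos(\gamma x)$ changes sign. A correct reduction would use harmonicity of $\Re\Phi$; the boundary to check is then the lines $\delta=\pm\frac12$ together with the segment $\gamma=T$, not the line $\delta=0$.

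The missing idea is that Hoffstein's method does not compare $\log|d_K|$ with anything; it eliminates it. Multiplying the Landau--Stark identity at $\sigma_0$ by $\sigma_1-\sigma_0$ and combining it with the identity for $\log\Lambda(\sigma_1)/\Lambda(\sigma_0)$ cancels $\log\delta_K$ exactly. The differentiated identity at $\sigma_2$ has no discriminant term at all. In explicit-formula language, the combined test function vanishes at $0$.

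The result, \eqref{Identity:5}, equates a zero sum with $r_1\Delta_{\mathbb R}+r_2\Delta_{\mathbb C}+\Delta_O+P$, and the parameters are chosen so that every sign points the right way:
\begin{itemize}
\item $P=\sum_{\mathfrak p}\Xi_{\bsigma,h,\eta}(N\mathfrak p)\ge0$, via the sign analysis of $W_\theta$ and the certified check $S_{20}(t)>0$;
\item $\Delta_{\mathbb R},\Delta_{\mathbb C}>0$;
\item $\Omega_{\bsigma}\le0$ for $\gamma\ge0.6$.
\end{itemize}
A quantity $\ge c\,n_K-O(1)$ is thus set against a high-zero contribution $\le0$.

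Stechkin's trick there means subtracting $\eta$ times the same identities at $\sigma+h$. This multiplies each prime-power weight by $1-\eta N\mathfrak p^{-mh}\ge0$ and so preserves that positivity. Your $\Phi_0(t)-c\,\Phi_0(t/\lambda)$, i.e.\ $F(x)-c\lambda F(\lambda x)$, is a different device with no such guarantee. Also, working at $\sigma_j>1$ makes every zero weight an explicit elementary function of $(\beta,\gamma)$ on the whole strip. Its sign is then certified by the $1/\gamma$ expansion (with $J_4>0$ independent of $\beta$) plus interval arithmetic, with no compact-support cut-off.

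To rescue your route you would need at least $F\le0$ on $[\log2,\infty)$, which excludes Fejér-type kernels. Better still, take $F(0)=0$ so that the discriminant drops out, as in the paper.
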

Theorem \ref{main} is proved in \S\ref{proof-main} by generalising Hoffstein's method using
Stechkin's trick --- see \S \ref{bushel} for details. We consider the same comparison weight introduced by
Hoffstein to detect low-lying zeros, and deform it by a linear shift of the
parameters. 
%The relevant point is that the original and shifted prime ideal contributions remain non-negative, and therefore may be discarded by positivity when proving a lower bound.

We remark that our method is considerably simpler, and almost certainly weaker, than those used by Odlyzko. Therefore, one could expect that the introduction of Stechkin's trick, combined with Odlyzko's methods, could improve our Theorem \ref{main} further.

At the heart of our method is an optimisation problem (Problem \ref{Optimisation:A}). This problem is then reduced to simpler ones, which we eventually solve via numerical analysis. See diagram below.
\begin{equation*}
\begin{tikzcd}[
    column sep=1.7em,
    row sep=0.1em,
    cells={nodes={inner sep=1pt}}
]
        & \textnormal{Problem \ref{Optimisation:C}}
          \arrow[dr,Rightarrow,shorten <=3pt,shorten >=3pt] & \\
        & & \textnormal{Problem \ref{Optimisation:B}}
          \arrow[r,Rightarrow,shorten <=3pt,shorten >=3pt]
          & \textnormal{Problem \ref{Optimisation:A}} \\
        & \textnormal{Problem \ref{Optimisation:D}}
          \arrow[ur,Rightarrow,shorten <=3pt,shorten >=3pt] &
\end{tikzcd}
\end{equation*}
In other words, either Problem \ref{Optimisation:C} or Problem \ref{Optimisation:D} is sufficient to prove Problem \ref{Optimisation:B}, and Problem \ref{Optimisation:B} is sufficient to prove the main Problem \ref{Optimisation:A}.
After defining out notation below, we introduce necessary properties of $\zeta_{K}$ in \S \ref{bushel}. We set up Hoffstein's argument, with our variant using Stechkin's trick, in \S \ref{peck}. We introduce symmetric weight functions in \S \ref{ton} to use in our optimisation problem in \S \ref{rod}. We record our results in \S \ref{outputs} and end with some concluding remarks about future work in \S \ref{numerics}.

\subsection{Notation}

Let $f$ be a complex-valued function and $g$ a non-negative real-valued function, both defined on the same domain. 
We define the $O^{*}$-notation as follows
\begin{equation*}
f(x)=O^{*}\!\left(g(x)\right)
\quad\Longleftrightarrow\quad
|f(x)|\leq  g(x).
\end{equation*}

The variable $s$ denotes a complex number, and we write $\Re s=\sigma$, $\Im s=t$.
If $\rho$ is a non-trivial zero of $\zeta_K$, we write $ \rho=\beta+i\gamma$, $0<\beta<1.$
All zero sums are taken over non-trivial zeros in the critical strip and are
counted with multiplicity. Thus sums of the form
\begin{equation*}
        \sum_{\rho=\beta},\qquad\qquad \sum_{\substack{\rho=\beta+i\gamma\\ \gamma>0}}
\end{equation*}
mean respectively a sum over real non-trivial zeros $\rho=\beta$, $0<\beta<1$, counted
with multiplicity, and a sum over one representative from each non-real conjugate pair, counted
with multiplicity. 

Finally, we write $\bsigma=(\sigma_0,\sigma_1,\sigma_2)$.

%%%%%%%%%%%%%%%%%%%%%%%%%%%%%%%%%%%%%%%%%%%%%%%%%
%%%%%%%%%%%%%%%%%%%%%%%%%%%%%%%%%%%%%%%%%%%%%%%%%
\section{The Dedekind zeta function}\label{bushel}
%%%%%%%%%%%%%%%%%%%%%%%%%%%%%%%%%%%%%%%%%%%%%%%%%
%%%%%%%%%%%%%%%%%%%%%%%%%%%%%%%%%%%%%%%%%%%%%%%%%

The Dedekind zeta function associated with the number field $K$ is defined for $\Re s>1$ by
\begin{equation*}
\zeta_K(s) =\sum_{\mathfrak a\in I_K^*}\frac{1}{(N\mathfrak a)^s},
\end{equation*}
where $I_K^*$ is the set of nonzero integral ideals of $K$ and $N\mathfrak a=[\mathcal O_K:\mathfrak a]$
is the ideal norm. 

Furthermore, the norm function $\mathfrak p\in P_K^*\mapsto N\mathfrak p\in\mathbb{N}$ is a multiplicative function, and thus allows us to express $\zeta_K$ by means of an Euler product in the region $\{\sigma>1\}$. This is based on unique factorisation of nonzero integral ideals into nonzero prime ideals of $K$. Thus we have the following identity
\begin{equation}\label{Euler}
\zeta_K(s)=\prod_{\mathfrak p\in P_K^*}\bigg(1-\frac{1}{N\mathfrak{p}^s}\bigg)^{-1},\qquad\qquad\sigma>1.
\end{equation}

\begin{lemma}\label{ideals}\normalfont
For $\sigma>1$, both 
\begin{equation*}-\frac{\zeta_K'}{\zeta_K}(\sigma)\quad \text{ and }\quad\bigg(\frac{\zeta_K'}{\zeta_K}\bigg)'(\sigma)
\end{equation*} 
are positive.
Also, for $\sigma_0,\sigma_1>1$,
\begin{equation*}
\log\frac{\zeta_K(\sigma_1)}{\zeta_K(\sigma_0)}=\sum_{\mathfrak p\in P_K^*}\log \left( \frac{1-(N\mathfrak p)^{-\sigma_0}}{1-(N\mathfrak p)^{-\sigma_1}}\right).
\end{equation*}
\end{lemma}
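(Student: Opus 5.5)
We will work directly from the Euler product \eqref{Euler}. For $\sigma>1$ the product converges absolutely and no factor vanishes, since $(N\mathfrak p)^{-\sigma}\le 2^{-\sigma}<1$. Hence $\zeta_K(\sigma)>0$ is real on $(1,\infty)$ and we may take real logarithms termwise:
\begin{equation*}
\log\zeta_K(\sigma)=-\sum_{\mathfrak p\in P_K^*}\log\left(1-(N\mathfrak p)^{-\sigma}\right)=\sum_{\mathfrak p\in P_K^*}\sum_{m\ge1}\frac{(N\mathfrak p)^{-m\sigma}}{m}.
\end{equation*}
This double series has non-negative terms. It converges since $\sum_{\mathfrak p}(N\mathfrak p)^{-\sigma}\le n_K\sum_p p^{-\sigma}<\infty$, because at most $n_K$ prime ideals lie above each rational prime $p$. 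Subtracting the identity at $\sigma_0$ from the one at $\sigma_1$ then gives the second claim directly, since both series converge absolutely and we may combine them prime by prime.

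For the derivatives, we differentiate the double series termwise. This is justified because on any half-line $\sigma\ge\sigma^*>1$ the differentiated series $\sum_{\mathfrak p,m}(\log N\mathfrak p)^k(N\mathfrak p)^{-m\sigma}m^{k-1}$, for $k=1,2$, is dominated by a convergent series. Indeed $(\log N\mathfrak p)^k\ll_\epsilon (N\mathfrak p)^{\epsilon}$, and we again use $\#\{\mathfrak p\mid p\}\le n_K$. We obtain
\begin{equation*}
-\frac{\zeta_K'}{\zeta_K}(\sigma)=\sum_{\mathfrak p}\sum_{m\ge1}\frac{\log N\mathfrak p}{(N\mathfrak p)^{m\sigma}},\qquad \left(\frac{\zeta_K'}{\zeta_K}\right)'(\sigma)=\sum_{\mathfrak p}\sum_{m\ge1}\frac{m(\log N\mathfrak p)^2}{(N\mathfrak p)^{m\sigma}}.
\end{equation*}
Every term in both series is non-negative. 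Moreover $P_K^*$ is nonempty: for example, there is a prime ideal above $2$ with $N\mathfrak p\ge2$, so $\log N\mathfrak p>0$. Hence both sums are strictly positive.

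The only step needing any care is justifying the termwise differentiation, that is, the uniform convergence of the derivative series. This is routine once the count of prime ideals above each rational prime is bounded by $n_K$. Everything else is immediate from the Euler product.
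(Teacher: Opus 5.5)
Your proposal is correct and follows the same route as the paper: take logarithms of the Euler product \eqref{Euler}, differentiate termwise to obtain the two series over prime ideal powers, and read off positivity from the non-negativity of every term. You also supply the details the paper leaves implicit, namely absolute convergence, the justification for termwise differentiation, and the non-emptiness of $P_K^*$ needed for strict positivity.
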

\begin{proof} The logarithmic identity follows directly from \eqref{Euler}. On the other hand, again by the Euler product expression \eqref{Euler} for $\zeta_K$, since $\Re s>1$, we have
\begin{equation*}
 -\frac{\zeta_K'}{\zeta_K}(s)=\sum_{\mathfrak p\in P_K^*}\sum_{m\geq 1}
\frac{\log N\mathfrak p}{N\mathfrak p^{ms}},\qquad 
\bigg(\frac{\zeta_K'}{\zeta_K}\bigg)'(s)=\sum_{\mathfrak p\in P_K^*}\sum_{m\geq 1}
\frac{m\log^2 N\mathfrak p}{N\mathfrak p^{ms}}.
\end{equation*}
By selecting $s=\sigma>1$, we obtain that the above expressions are positive.
\end{proof}

\subsection{Functional equation} Let
\begin{equation}\label{Lambda}
\Lambda(s)=|d_K|^{s/2}\bigg(\frac{\Gamma(s/2)}{\pi^{s/2}}\bigg)^{r_1}\bigg(\frac{\Gamma(s)}{(2\pi)^s}\bigg)^{r_2}\zeta_K(s)
\end{equation}
The function $\Lambda$ is invariant by the change $s\mapsto 1-s$. In other words, we have the functional equation
\begin{equation}\label{functional}
\Lambda(s)=\Lambda(1-s).
\end{equation}
Further, the function $s(s-1)\Lambda(s)$ is entire of order $1$ whose zeros are the nontrivial zeros of $\zeta_K$. Thus, the Dedekind zeta function has an analytic extension to $\mathbb{C}$ except for a simple pole at $s=1$ and its non-trivial zeros are those belonging to the critical strip $\{0<\Re s<1\}$. 

\subsection{Zeros of Dedekind zeta functions} Let $\rho=\beta+i\gamma$ be a non-trivial zero of the Dedekind zeta function $\zeta_K$ associated with a number field $K$. Thanks to the functional equation \eqref{functional}, we have that $1-\rho$ is also a zero of $\zeta_K$. Moreover, since $\overline{\zeta_K(s)}=\zeta_K(\overline{s})$,
the zeros of $\zeta_K$ appear in quadruples $\{\rho,\overline{\rho},1-\rho,1-\overline{\rho}\}$ with possible coincidences.
%
%On the other hand, the generalised Riemann hypothesis (GRH) states that all the nontrivial zeros of $\zeta_K$ lie on the vertical line $\{\sigma=\tfrac{1}{2}\}$, that is, we always have $\beta=\tfrac{1}{2}$. 
%
Note that those zeros are always counted with multiplicities.

By recalling definition \eqref{Lambda}, we have the following Hadamard product
\begin{equation}\label{Hadamard}
\xi_K(s):=s(s-1)\Lambda(s)
        =e^{a+bs}\prod_{\rho}\left(1-\frac{s}{\rho}\right)e^{s/\rho},
\end{equation}
where the product runs over the non-trivial zeros $\rho$ of
$\zeta_K$, counted with multiplicity, and converges normally on $\mathbb{C}$.

\begin{lemma}\label{Hadamard:grouped}\normalfont
There exists a non-zero constant $C>0$ such that
\begin{equation}\label{pound}
\xi_K(s)=
C\prod_{\rho=\beta}\left(1-\frac{s}{\rho}\right)
\prod_{\substack{\rho=\beta+i\gamma\\ \gamma>0}}
\left(1-\frac{s}{\rho}\right) \left(1-\frac{s}{\overline{\rho}}\right),
\end{equation}
where the products are taken with multiplicity.
Consequently, for $s$ not a zero of $\zeta_K$,
\begin{equation}\label{stone}
\frac{\xi_K'}{\xi_K}(s) =
\sum_{\rho=\beta} \frac{1}{s-\rho}+\sum_{\substack{\rho=\beta+i\gamma\\ \gamma>0}}\left(\frac{1}{s-\rho}+ \frac{1}{s-\overline{\rho}}\right).
\end{equation}
\end{lemma}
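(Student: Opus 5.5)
We start from the Hadamard product \eqref{Hadamard} and show that the factors $e^{s/\rho}$ can be removed once the zeros are grouped suitably, leaving $e^{a+bs}$ with $b$ forced to vanish.

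\textbf{Step 1: pair the non-real zeros.} For a non-real zero $\rho$ with $\gamma>0$ we combine the factor for $\rho$ with the one for $\overline{\rho}$. The exponential factors multiply to $e^{s(1/\rho+1/\overline{\rho})}=e^{2s\beta/|\rho|^2}$. The functional equation \eqref{functional} also makes $1-\rho$ a zero, so for real $s$ the grouped product is real. Real zeros come in pairs $\beta,1-\beta$, or coincide at $\beta=1/2$.

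\textbf{Step 2: justify the regrouping.} The sum $\sum_\rho 1/|\rho|^2$ converges, since $\xi_K$ has order $1$. The sum $\sum_\rho \Re(1/\rho)=\sum_\rho \beta/|\rho|^2$ also converges, because $0<\beta<1$. Hence $\sum_{\gamma>0}2\beta/|\rho|^2$ converges absolutely, and so does the finite sum over real zeros. The exponentials can therefore be pulled out of the normally convergent product as a single factor $e^{sB}$, where $B=\sum_\rho \Re(1/\rho)$. The remaining paired product $\prod(1-s/\rho)(1-s/\overline\rho)$ converges absolutely, since $\sum 1/|\rho|^2<\infty$ controls its quadratic term and the linear coefficient $\sum 2\beta/|\rho|^2$ is summable. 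We are left with $\xi_K(s)=e^{a+(b+B)s}P(s)$, where $P(s)$ is the grouped product in \eqref{pound}.

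\textbf{Step 3: show that $b+B=0$.} This is the main obstacle; I would use the symmetry $\xi_K(s)=\xi_K(1-s)$. The multiset of zeros is invariant under $\rho\mapsto 1-\rho$ and under conjugation. Set $B'=b+B$ and take logarithmic derivatives of \eqref{Hadamard}. At $s=0$ we get $\xi_K'/\xi_K(0)=b$, and at $s=1$ we get $\xi_K'/\xi_K(1)=b+\sum_\rho\bigl(\frac{1}{1-\rho}+\frac1\rho\bigr)$. The functional equation gives $\xi_K'/\xi_K(1)=-\xi_K'/\xi_K(0)$.

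The sum $\sum_\rho\bigl(\frac1{1-\rho}+\frac1\rho\bigr)$, taken in the symmetric grouping, equals $2\sum_\rho \frac1\rho$. Pairing conjugates turns this into $2B$. Substituting gives $2b+2B=0$, that is, $B'=0$.

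This step needs care: $\sum 1/\rho$ converges only conditionally in general. Here, however, every term in the conjugate-paired sum is real and positive, so the sum converges absolutely. We also need the invariance of the zero multiset under $\rho\mapsto1-\rho$ to hold at the level of these absolutely convergent paired sums, and it does.

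\textbf{Step 4: conclude.} Put $C=e^a$. Since $\xi_K(0)=\xi_K(1)\neq0$, evaluating \eqref{pound} at $s=0$ gives $C=\xi_K(0)$. This value is real, and it is positive because $\xi_K(1)=\lim_{s\to1}(s-1)\zeta_K(s)\,|d_K|^{1/2}\pi^{-r_1/2}(2\pi)^{-r_2}$ is positive by the class number formula (a residue of a Dirichlet series with positive coefficients). Finally, \eqref{stone} follows by logarithmic differentiation of \eqref{pound}, which is legitimate because the paired product converges normally away from the zeros.
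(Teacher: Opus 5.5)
Your proof is correct, but you handle the key step, removing the linear exponential term, differently from the paper.

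The paper works at the level of the product. It shows $P(1-s)=cP(s)$ by reindexing under $\rho\mapsto 1-\overline{\rho}$, using that $\prod|1-\rho|^2/|\rho|^2$ converges. This gives $e^{a+Bs}P(s)=c\,e^{a+B(1-s)}P(s)$, hence $e^{B(2s-1)}=c$ for all $s$, which forces $B=0$ and $c=1$.

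You instead evaluate the logarithmic derivative of the Hadamard product at $s=0$ and $s=1$ and use $\xi_K'/\xi_K(1)=-\xi_K'/\xi_K(0)$. You then reindex the convergent positive series $\sum_{\gamma>0}2\beta/|\rho|^2$ under $\rho\mapsto 1-\overline{\rho}$. This is the classical Davenport-style argument, and it yields the explicit value $b=-\sum_\rho\Re(1/\rho)$.

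What each route buys:
\begin{itemize}
\item Your route only manipulates absolutely convergent series of positive terms. You never have to track the product constant $c$, or the factors $-(1-\beta)/\beta$ that real zeros contribute under $s\mapsto 1-s$; the paper passes over these as ``finitely many analogous factors''.
\item The paper's route stays entirely within identities between functions and needs no choice of evaluation points.
\end{itemize}

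You also justify $C>0$, via $C=\xi_K(0)=\xi_K(1)$ and the positivity of the residue of $\zeta_K$ at $s=1$. The paper asserts this without argument when it sets $C=e^a$.

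One harmless slip there: you dropped the factor $\Gamma(1/2)^{r_1}=\pi^{r_1/2}$. In fact $\xi_K(1)=|d_K|^{1/2}(2\pi)^{-r_2}\,\mathrm{Res}_{s=1}\zeta_K(s)$, which does not affect positivity.
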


\begin{proof}
In the Hadamard product \eqref{Hadamard} we separate the real zeros from the non-real zeros and group the latter in
conjugate pairs; this reordering is justified by absolute convergence. There are only finitely many real non-trivial zeros and for $\rho=\beta+i\gamma$, $\gamma>0$, we have
\begin{equation*}
e^{s/\rho}e^{s/\overline{\rho}}
=e^{s\frac{2\beta}{\beta^2+\gamma^2}}=e^{O\left(\frac{1}{\gamma^2}\right)}.
\end{equation*}
Hence, the corresponding sum over zeros $\rho=\beta+i\gamma$, $\gamma>0$, converges thanks to the zero counting estimate
\begin{equation}\label{N_K}
N_K(T)
:=\#\left\{\rho=\beta+i\gamma:\zeta_K(\rho)=0,\ 0<\beta<1,\ 0<\gamma\leq  T\right\}\ll T(\log T+2).
\end{equation} 
where zeros are counted with multiplicity.
Subsequently, all exponential correction factors combine with
$e^{bs}$ into a single exponential factor $e^{Bs}$.  Thus we may write
\begin{equation*}
\xi_K(s) =e^{a+Bs}\prod_{\rho=\beta} \left(1-\frac{s}{\rho}\right)\prod_{\substack{\rho=\beta+i\gamma\\ \gamma>0}} \left(1-\frac{s}{\rho}\right)\left(1-\frac{s}{\overline{\rho}}\right)=e^{a+Bs}P(s).
\end{equation*}

On the other hand, the functional equation
$\xi_K(s)=\xi_K(1-s)$ implies that the non-trivial zeros are invariant under
$\rho\longmapsto1-\rho$. Moreover, for a non-real zero
$\rho=\beta+i\gamma$ with $\gamma>0$, we have
\begin{equation*}
\left(1-\frac{1-s}{\rho}\right)\left(1-\frac{1-s}{\overline{\rho}}\right)=
\frac{|1-\rho|^2}{|\rho|^2}\left(1-\frac{s}{1-\rho}\right)\left(1-\frac{s}{1-\overline{\rho}}\right).
\end{equation*}
The map $\rho\longmapsto1-\overline{\rho}$ permutes the zeros in the upper half-plane. Therefore, reindexing the absolutely convergent product gives
\begin{align*}
P(1-s)=P(s)\prod_{\rho}\frac{|1-\rho|^2}{|\rho|^2}=
P(s)\prod_{\rho}\bigg(1+O^{*}\left(\frac{1}{\gamma^2}\right)\bigg)=cP(s),
\end{align*}
up to the finitely many analogous factors arising from the real zeros where
$c\in\mathbb{C}\setminus{\{0\}}$. Hence
\begin{equation*}e^{a+Bs}P(s)=\xi_K(s)=\xi_K(1-s)=e^{a+B(1-s)}P(1-s)=ce^{a+B(1-s)}P(s).
\end{equation*}
Since $P$ is not
identically zero, this forces $e^{B(2s-1)}=c$
for all $s\in\mathbb{C}$, and therefore $B=0$ and $c=1$. Setting $C=e^a$, we obtain \eqref{pound}  

On the other hand, taking logarithmic derivative gives
\begin{equation*}
\frac{\xi_K'}{\xi_K}(s) =\frac{P'(s)}{P(s)}.
\end{equation*}
whence we obtain \eqref{stone}.
\end{proof}

The Landau--Stark formulas correspond to the logarithmic derivative of \eqref{Hadamard} and its derivate.

\begin{theorem}[Landau--Stark identity]\normalfont Let $\psi$ be the logarithmic derivative of the Gamma function $\Gamma$, that is 
\begin{equation*}
\psi(s)=\frac{\Gamma'}{\Gamma}(s).
\end{equation*}
Then
\begin{align}\label{Landau-Stark}
\log|d_K|&=r_1\bigg(\log\pi-\psi\bigg(\frac{s}{2}\bigg)\bigg)+2r_2(\log2\pi-\psi(s))-\frac{2}{s}-\frac{2}{s-1}\nonumber\\
&\qquad -2\frac{\zeta_K'}{\zeta_K}(s)+
2\sum_{\rho=\beta}\frac{1}{s-\rho}
+
2\sum_{\substack{\rho=\beta+i\gamma\\ \gamma>0}}
\left(
\frac{1}{s-\rho}
+
\frac{1}{s-\overline{\rho}}\right),
\end{align}
and
\begin{align*}
0=\ &-\frac{r_1}{4}\psi'\bigg(\frac{s}{2}\bigg)-r_2\psi'(s)+\frac{1}{s^2}+\frac{1}{(s-1)^2}-\bigg(\frac{\zeta_K'}{\zeta_K}\bigg)'(s)
\nonumber\\
&\quad
-\sum_{\rho=\beta}\frac{1}{(s-\rho)^2}-
\sum_{\substack{\rho=\beta+i\gamma\\ \gamma>0}}
\left(\frac{1}{(s-\rho)^2}+\frac{1}{(s-\overline{\rho})^2}\right).
\end{align*}
\end{theorem}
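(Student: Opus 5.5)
We obtain both identities by computing the logarithmic derivative of $\xi_K(s)=s(s-1)\Lambda(s)$ in two ways: once from the definition \eqref{Lambda}, once from the grouped Hadamard product of Lemma \ref{Hadamard:grouped}. The second identity then follows by differentiating the first.

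\emph{Step 1: the definition.} For $s$ away from the zeros and poles, taking the logarithmic derivative of $s(s-1)\Lambda(s)$ term by term in \eqref{Lambda} gives
\begin{equation*}
\frac{\xi_K'}{\xi_K}(s)=\frac1s+\frac1{s-1}+\frac12\log|d_K|+r_1\Big(\tfrac12\psi\big(\tfrac s2\big)-\tfrac12\log\pi\Big)+r_2\big(\psi(s)-\log 2\pi\big)+\frac{\zeta_K'}{\zeta_K}(s).
\end{equation*}
\emph{Step 2: the product.} Equate this with the expression \eqref{stone} from Lemma \ref{Hadamard:grouped}. Here we rely on the fact, established in that lemma, that the constant $B$ vanishes. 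Without it a term $2B$ would survive, and it is exactly the functional-equation argument there that removes it.

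\emph{Step 3: rearrange.} Multiply the resulting equality by $2$ and solve for $\log|d_K|$. This yields \eqref{Landau-Stark}. The coefficients check out: $2\cdot\frac{r_1}{2}(\log\pi-\psi(s/2))=r_1(\log\pi-\psi(s/2))$, and $2r_2(\log 2\pi-\psi(s))$ matches as well.

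\emph{Step 4: differentiate.} Differentiate \eqref{Landau-Stark} in $s$ and divide by $-2$. The left side $\log|d_K|$ is constant, so it contributes $0$. On the right:
\begin{itemize}
\item $\frac{d}{ds}\psi(s/2)=\frac12\psi'(s/2)$, so the $r_1$ term becomes $-\frac{r_1}{2}\psi'(s/2)$, and after dividing by $-2$ gives $\frac{r_1}{4}\psi'(s/2)$;
\item $-\frac2s-\frac2{s-1}$ becomes $\frac2{s^2}+\frac2{(s-1)^2}$;
\item $-2\frac{\zeta_K'}{\zeta_K}(s)$ becomes $-2\big(\frac{\zeta_K'}{\zeta_K}\big)'(s)$;
\item each zero term $\frac{2}{s-\rho}$ becomes $-\frac{2}{(s-\rho)^2}$.
\end{itemize}
Collecting these and dividing by $-2$ gives a relation of the same shape as the stated second identity. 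The overall sign of the result is immaterial, since the left side is $0$.

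\emph{Main obstacle.} The only non-formal point is justifying the termwise differentiation of the zero sum. The series in \eqref{stone} converges locally uniformly on compact sets avoiding zeros: pairing $\rho$ with $\overline\rho$ gives terms of size $O(\gamma^{-2})$, which are summable by \eqref{N_K}. By Weierstrass' theorem, locally uniform convergence of holomorphic functions allows differentiation term by term. The differentiated series has terms of size $O(\gamma^{-2})$ and converges absolutely, so no regrouping issues arise.

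Finally, both identities extend from the half-plane where the Euler product converges to all admissible $s$ by analytic continuation, since every term is meromorphic.
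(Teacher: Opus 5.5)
Your proposal is correct and is essentially the paper's argument. You take the logarithmic derivative of $\xi_K$ from \eqref{Lambda}, equate it with the grouped Hadamard expression \eqref{stone} (where $B=0$), and differentiate once more; your Weierstrass justification of termwise differentiation supplies a detail the paper leaves implicit. Two cosmetic remarks: dividing by $2$ rather than $-2$ in Step 4 gives the second identity exactly in the stated form, and the closing appeal to analytic continuation is unnecessary, since the derivation never uses the Euler product.
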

Stechkin's classical idea (first introduced in \cite{Stechkin*70}) is to replace a logarithmic derivative at
one point by a shifted difference. See \cite{RosserSchoenfeld*75}, \cite{Louboutin*15} and \cite{Louboutin*17} for more details. In its simplest form one compares 
\begin{equation}\label{shifted}
-\frac{\zeta_K'}{\zeta_K}(\sigma+it)
\quad\text{ with }\quad
-\frac{\zeta_K'}{\zeta_K}(\sigma+it)
+\eta \frac{\zeta_K'}{\zeta_K}(\sigma+h+it),
\qquad h\geq 0,\quad 0\leq \eta<1.
\end{equation}
The effect is that the Euler product introduces the factor
\begin{equation*}
n^{-\sigma}-\eta n^{-(\sigma+h)}=
        n^{-\sigma}\left(1-\eta n^{-h}\right).
\end{equation*}
Thus, provided $0<\eta\leq  1$ and $h\geq 0$, the prime ideal contribution remains
non-negative but we have gained flexibility for possible optimisation with the introduction of new parameters $h$ and $\eta$.

Let $0\leq \eta<1$, and let $s,s^*>1$.  We evaluate the
Landau--Stark formulas \eqref{Landau-Stark} at the two points $s$
and $s^*$.  We then multiply the identity evaluated at $s^*$ by
$\eta$, and subtract it from the identity evaluated at $s$.  This
gives the following Stechkin deformation of the Landau--Stark identities.

\begin{theorem}[Landau--Stark--Stechkin]\normalfont
Let $0\leq \eta<1$, and let $s,s^*>1$.  Then
\begin{align*}
(1-\eta)\log|d_K|&=r_1\bigg[(1-\eta)\log\pi-\psi\bigg(\frac{s}{2}\bigg)+\eta\psi\bigg(\frac{s^*}{2}\bigg)\bigg]-\frac{2}{s}-\frac{2}{s-1}+\frac{2\eta}{s^*}+\frac{2\eta}{s^*-1}\nonumber\\
&\quad+2r_2[(1-\eta)\log2\pi-\psi(s)+\eta\psi(s^*))]-2\frac{\zeta_K'}{\zeta_K}(s)+2\eta\frac{\zeta_K'}{\zeta_K}(s^*)\nonumber\\
&+
2\sum_{\rho=\beta}
\left(
\frac{1}{s-\rho}-
\frac{\eta}{s^*-\rho}\right)
+2\sum_{\substack{\rho=\beta+i\gamma\\ \gamma>0}}
\left(\frac{1}{s-\rho}
+\frac{1}{s-\overline{\rho}}-\frac{\eta}{s^*-\rho}-\frac{\eta}{s^*-\overline{\rho}}\right).
\end{align*}
\begin{align}\label{Landau-Stark-Stechkin-Derivative}
0=&-\frac{r_1}{4}\bigg[\psi'\bigg(\frac{s}{2}\bigg)-\eta\psi'\bigg(\frac{s^*}{2}\bigg)\bigg]-r_2\big[\psi'(s)-\eta\psi'(s^*)\big]
+\frac{1}{s^2}+\frac{1}{(s-1)^2}\nonumber\\
&\quad-\frac{\eta}{{s^*}^2}-\frac{\eta}{(s^*-1)^2}-\bigg(\frac{\zeta_K'}{\zeta_K}\bigg)'(s)
+\eta\bigg(\frac{\zeta_K'}{\zeta_K}\bigg)'(s^*)
-\sum_{\rho=\beta}\left(\frac{1}{(s-\rho)^2}
-\frac{\eta}{(s^*-\rho)^2}\right)\nonumber\\
&\quad-\sum_{\substack{\rho=\beta+i\gamma\\ \gamma>0}}\left(\frac{1}{(s-\rho)^2}
+\frac{1}{(s-\overline{\rho})^2}-\frac{\eta}{(s^*-\rho)^2}
-\frac{\eta}{(s^*-\overline{\rho})^2}\right).
\end{align}
By setting $\eta=0$, we recover the classical Landau--Stark identities.
\end{theorem}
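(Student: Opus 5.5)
The identity follows by linearity from the Landau--Stark identity \eqref{Landau-Stark} and its differentiated form, taking the difference (identity at $s$) $-\ \eta\cdot$(identity at $s^*$). The only things to check are that both identities really hold at real points $s,s^*>1$, and that the resulting zero sums converge absolutely, so that they may be combined term by term.

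First I would confirm the Landau--Stark identity itself for $s>1$. Take the logarithmic derivative of $\xi_K(s)=s(s-1)\Lambda(s)$ using \eqref{Lambda}:
\begin{equation*}
\frac{\xi_K'}{\xi_K}(s)=\frac1s+\frac1{s-1}+\frac12\log|d_K|+\frac{r_1}{2}\Big(\psi\Big(\frac s2\Big)-\log\pi\Big)+r_2\big(\psi(s)-\log 2\pi\big)+\frac{\zeta_K'}{\zeta_K}(s).
\end{equation*}
Equate this with \eqref{stone} from Lemma \ref{Hadamard:grouped} and multiply by $2$; this gives \eqref{Landau-Stark}. Differentiating once more gives the second identity. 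This step is legitimate because the grouped series in \eqref{stone} converges locally uniformly away from the zeros, so it may be differentiated term by term. Concretely, each paired term is
\begin{equation*}
\frac{1}{s-\rho}+\frac{1}{s-\overline\rho}=\frac{2(s-\beta)}{(s-\beta)^2+\gamma^2}=O(\gamma^{-2}),
\end{equation*}
and its derivative is $O(\gamma^{-2})$ as well; the counting bound \eqref{N_K} then gives convergence. For real $s>1$ no zero is hit, since all zeros lie in the strip $0<\beta<1$. By Lemma \ref{ideals}, all the quantities involving $\zeta_K'/\zeta_K$ are finite there.

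Next, write the first identity at $s$ and at $s^*$, multiply the second by $\eta$, and subtract. On the left, $\log|d_K|-\eta\log|d_K|=(1-\eta)\log|d_K|$. On the right, the constant terms combine as
\begin{equation*}
r_1\log\pi-\eta r_1\log\pi=r_1(1-\eta)\log\pi,\qquad 2r_2\log2\pi-2\eta r_2\log2\pi=2r_2(1-\eta)\log 2\pi.
\end{equation*}
The digamma terms become $-\psi(s/2)+\eta\psi(s^*/2)$ and $-\psi(s)+\eta\psi(s^*)$. The rational terms acquire $+2\eta/s^*+2\eta/(s^*-1)$, and the zeta terms become $-2\frac{\zeta_K'}{\zeta_K}(s)+2\eta\frac{\zeta_K'}{\zeta_K}(s^*)$.

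For the zero sums, both series at $s$ and at $s^*$ converge absolutely (the real-zero sum is finite). Their linear combination can therefore be written as a single sum over the same index set, with summand $\frac1{s-\rho}+\frac1{s-\overline\rho}-\frac{\eta}{s^*-\rho}-\frac{\eta}{s^*-\overline\rho}$. The derivative identity \eqref{Landau-Stark-Stechkin-Derivative} is handled in exactly the same way, giving terms such as $-\frac{r_1}{4}[\psi'(s/2)-\eta\psi'(s^*/2)]$ and $-\frac{\eta}{{s^*}^2}-\frac{\eta}{(s^*-1)^2}$ with the appropriate signs. Setting $\eta=0$ visibly recovers the originals.

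I expect no real obstacle. The only point needing care is the justification of termwise differentiation and recombination of the conditionally ordered zero sums. Grouping conjugate pairs as in Lemma \ref{Hadamard:grouped} yields absolute convergence, which handles both. Beyond that, the work is sign bookkeeping.
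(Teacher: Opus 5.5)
Your proposal is correct and matches the paper's argument: evaluate \eqref{Landau-Stark} and its derivative at $s$ and $s^*$, multiply the $s^*$ version by $\eta$, and subtract. Your term-by-term bookkeeping of the $\log\pi$, $\log 2\pi$, digamma, rational and $\zeta_K'/\zeta_K$ terms agrees with the stated formulas. Your extra derivation of \eqref{Landau-Stark} from \eqref{stone}, and your check that the paired zero sums and their termwise derivatives converge, are a valid supplement that the paper leaves implicit.
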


\section{Low-lying zeros of $\zeta_K$: Hoffstein's argument via Stechkin's trick}\label{peck}
We now adapt Hoffstein's argument to the shifted differences introduced in \eqref{shifted}. The construction uses three identities. The first is the Landau--Stark identity evaluated at $\sigma_0$ and $\sigma_0+h$ for $\sigma_0>1$ and $h\geq 0$; the second compares the functional equation \eqref{Hadamard} via \eqref{Lambda} at $\sigma_0$ and $\sigma_1>\sigma_0$, together with their shifted counterparts; and the third is obtained from the differentiated Landau--Stark identity at $\sigma_2$ and $\sigma_2+h$ for $\sigma_2>1$. Those three identities combine to produce the comparison weight $\Phi_{\bsigma}$ defined in \S\ref{weight}.

Let
\begin{equation}\label{delta_K}
 \delta_K=\frac{|d_K|}{2^{2r_2}\pi^{n_K}}.
\end{equation}

\subsection{First identity} Let
\begin{equation*}
 \widetilde{F}_1(s)
 :=
 -\frac{r_1}{2}\psi\bigg(\frac{s}{2}\bigg)
 -r_2\psi(s)-\frac{1}{s}-\frac{1}{s-1}-\frac{\zeta_K'}{\zeta_K}(s).
\end{equation*}

\begin{lemma}\label{Identity:1}\normalfont
Let $\sigma=\sigma_0>1$ and $h\geq 0$. Define
\begin{equation}\label{F1:def}
 \mathcal{F}_1(\sigma_0;h,\eta):=\widetilde{F}_1(\sigma_0)-\eta \widetilde{F}_1(\sigma_0+h).
 \end{equation}
Then
\begin{align*}
\frac{1-\eta}{2}\log \delta_K
=\ &
\mathcal{F}_1(\sigma_0;h,\eta)
+\sum_{\rho=\beta}\left(\frac{1}{\sigma_0-\rho}-\frac{\eta}{\sigma_0+h-\rho}\right)
\\
&+
\sum_{\substack{\rho=\beta+i\gamma\\ \gamma>0}}
\left(\frac{2(\sigma_0-\beta)}{(\sigma_0-\beta)^2+\gamma^2}-\frac{2\eta(\sigma_0+h-\beta)}{(\sigma_0+h-\beta)^2+\gamma^2}
\right).
\end{align*}
\end{lemma}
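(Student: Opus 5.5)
The plan is to derive Lemma \ref{Identity:1} directly from the Landau--Stark identity \eqref{Landau-Stark}, evaluated at real arguments, followed by the Stechkin combination. The whole argument is bookkeeping of constants and of the conjugate-pair sums.

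First, evaluate \eqref{Landau-Stark} at a real point $s=\sigma>1$ and divide by $2$. Since $n_K=r_1+2r_2$, the constant terms satisfy
\begin{equation*}
\tfrac12\big(r_1\log\pi+2r_2\log 2\pi\big)=\tfrac{n_K}{2}\log\pi+r_2\log 2 .
\end{equation*}
Moving these to the left-hand side gives $\tfrac12\log|d_K|-\tfrac{n_K}{2}\log\pi-r_2\log2=\tfrac12\log\delta_K$, by the definition \eqref{delta_K}. The remaining non-zero terms on the right are exactly $\widetilde F_1(\sigma)$. This leaves the identity
\begin{equation*}
\tfrac12\log\delta_K=\widetilde F_1(\sigma)+\sum_{\rho=\beta}\frac{1}{\sigma-\rho}+\sum_{\substack{\rho=\beta+i\gamma\\ \gamma>0}}\left(\frac{1}{\sigma-\rho}+\frac{1}{\sigma-\overline{\rho}}\right).
\end{equation*}
For real $\sigma$, the conjugate pair combines into the real quantity
\begin{equation*}
\frac{1}{\sigma-\rho}+\frac{1}{\sigma-\overline{\rho}}=2\,\Re\frac{1}{\sigma-\beta-i\gamma}=\frac{2(\sigma-\beta)}{(\sigma-\beta)^2+\gamma^2}.
\end{equation*}

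Next, apply this identity at $\sigma=\sigma_0$ and at $\sigma=\sigma_0+h$; both exceed $1$ because $h\ge0$. Multiply the second instance by $\eta$ and subtract it from the first. The left-hand sides combine to $\frac{1-\eta}{2}\log\delta_K$. The $\widetilde F_1$ terms combine to $\mathcal F_1(\sigma_0;h,\eta)$, as in \eqref{F1:def}.

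The only point requiring care is merging the two zero sums into a single sum. The sum over real zeros is finite, so it merges trivially. The sum over conjugate pairs converges absolutely at each fixed real point, since each term is $O(1/\gamma^2)$ and \eqref{N_K} controls the zero count. Absolute convergence justifies subtracting the two series termwise, which yields the displayed formula.

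I expect no real obstacle here. The only things to double-check are the normalisation of $\delta_K$, namely the factor $2^{2r_2}$ against $r_2\log 2$, and that the real-zero terms are exactly as in \eqref{Landau-Stark} after halving.
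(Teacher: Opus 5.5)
Your proposal is correct and follows essentially the same route as the paper. You apply the Landau--Stark identity at $\sigma_0$ and $\sigma_0+h$, take the $\eta$-weighted difference, and combine each conjugate pair into $2(\sigma-\beta)/\bigl((\sigma-\beta)^2+\gamma^2\bigr)$. The paper forms the Stechkin difference before normalising. You instead normalise to $\tfrac12\log\delta_K$ first, and you also make explicit the constant bookkeeping and the termwise-subtraction justification that the paper leaves implicit.
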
 
\begin{proof}
By the Landau--Stark identity \eqref{Landau-Stark}, we obtain
\begin{align}\label{p1:1}
\frac{1-\eta}{2}\log \delta_K
=\ &\widetilde{F}_1(s)-\eta \widetilde{F}_1(s^*)+\sum_{\rho=\beta}\left(\frac{1}{s-\rho}
-\frac{\eta}{s^*-\rho}\right)
\nonumber\\
&+\sum_{\substack{\rho=\beta+i\gamma\\ \gamma>0}}\left(\frac{1}{s-\rho}
+\frac{1}{s-\overline{\rho}}-\frac{\eta}{s^*-\rho}-\frac{\eta}{s^*-\overline{\rho}}
\right).
\end{align}
Furthermore, we have
\begin{align*}
\frac{1}{s-\rho}+\frac{1}{s-\overline{\rho}}-\frac{\eta}{s^*-\rho}-\frac{\eta}{s^*-\overline{\rho}}=
\frac{2(s-\beta)} {(s-\beta)^2+\gamma^2}-\frac{2\eta(s^*-\beta)}{(s^*-\beta)^2+\gamma^2}.
\end{align*}
 On combining \eqref{F1:def} and \eqref{p1:1} and selecting $s=\sigma_0$, $s^*=\sigma_0+h$, we obtain the result.
 \end{proof}
 
\subsection{Second identity}

Recall equations \eqref{Hadamard} and \eqref{delta_K}. We have the following entire function
\begin{equation}\label{Hadamard2}
s(s-1)\Lambda(s)
=\delta_K^{s/2}\Gamma\left(\frac{s}{2}\right)^{r_1}\Gamma(s)^{r_2}\zeta_K(s)s(s-1).
\end{equation}
Taking the quotient of
\eqref{Hadamard2} evaluated at $s^*$ by the same equation evaluated at
$s$, we obtain
\begin{equation*}
\frac{s^*(s^*-1)\Lambda(s^*)}{s(s-1)\Lambda(s)}
=\delta_K^{(s^*-s)/2}\left(\frac{\Gamma(s^*/2)}{\Gamma(s/2)}\right)^{r_1}\left(\frac{\Gamma(s^*)}{\Gamma(s)}\right)^{r_2}\frac{\zeta_K(s^*)}{\zeta_K(s)}\frac{s^*(s^*-1)}{s(s-1)}.
\end{equation*}
Therefore, taking logarithms gives
\begin{equation*}
\log\left(\frac{s^*(s^*-1)\Lambda(s^*)}{s(s-1)\Lambda(s)}\right)
=\frac{s^*-s}{2}\log \delta_K+\widetilde{F}_2(s,s^*),
\end{equation*}
where
\begin{equation*}
\widetilde{F}_2(s,s^*)
=r_1\log\bigg(\frac{\Gamma(s^*/2)}{\Gamma(s/2)}\bigg)
+r_2\log\bigg(\frac{\Gamma(s^*)}{\Gamma(s)}\bigg)
+\log\bigg(\frac{\zeta_K(s^*)(s^*-1)}{\zeta_K(s)(s-1)}\bigg)
+\log\bigg(\frac{s^*}{s}\bigg).
\end{equation*}
We now introduce the Stechkin deformation of this identity. Let $s=\sigma_0>1$, $s^*=\sigma_1>\sigma_0$, $h\geq 0$ and $0\leq \eta<1$. Define
\begin{equation}\label{F2:def}
\mathcal{F}_{2}(\sigma_0,\sigma_1;h,\eta)
:=
\widetilde{F}_2(\sigma_0,\sigma_1)
-\eta \widetilde{F}_2(\sigma_0+h,\sigma_1+h).
\end{equation}

\begin{lemma}\label{Identity:2}\normalfont
Let $h\geq 0$ and $0\leq \eta<1$. Then
\begin{align*}
\frac{(1-\eta)(\sigma_1-\sigma_0)}{2}\log \delta_K+\mathcal{F}_{2}(\sigma_0,\sigma_1;h,\eta)
=\sum_{\rho=\beta}\left[\log\left(\frac{\sigma_1-\rho}{\sigma_0-\rho}\right)-\eta\log\left(\frac{\sigma_1+h-\rho}{\sigma_0+h-\rho}\right)\right]\\
+\sum_{\substack{\rho=\beta+i\gamma\\ \gamma>0}}\bigg[\log\left(\frac{(\sigma_1-\beta)^2+\gamma^2}{(\sigma_0-\beta)^2+\gamma^2}\right)
-\eta\log\left(\frac{(\sigma_1+h-\beta)^2+\gamma^2}{(\sigma_0+h-\beta)^2+\gamma^2}\right)\bigg].
\end{align*}
\end{lemma}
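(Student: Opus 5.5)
The plan is to compute $\log\bigl(\xi_K(\sigma_1)/\xi_K(\sigma_0)\bigr)$ in two ways, once from the definition \eqref{Hadamard2} and once from the grouped Hadamard product \eqref{pound}, and then apply the same computation to the shifted pair $(\sigma_0+h,\sigma_1+h)$ and take the $\eta$-combination.

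First, for real $1<s<s^*$, every factor in \eqref{Hadamard2} is positive. This uses $\zeta_K(\sigma)>0$ for $\sigma>1$ (from the Euler product \eqref{Euler}), $\Gamma>0$ on the positive reals, and $s(s-1)>0$. So the real logarithm is unambiguous, and the computation preceding the lemma gives
\begin{equation*}
\log\frac{\xi_K(s^*)}{\xi_K(s)}=\frac{s^*-s}{2}\log\delta_K+\widetilde{F}_2(s,s^*).
\end{equation*}

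Second, I will evaluate the same quotient through Lemma \ref{Hadamard:grouped}. The constant $C$ cancels, and I need to handle each factor.

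For a real zero $\rho=\beta\in(0,1)$, the factor is $(1-s^*/\beta)/(1-s/\beta)=(s^*-\beta)/(s-\beta)$, which is positive.

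For a conjugate pair with $\gamma>0$, I will use
\begin{equation*}
\Bigl(1-\frac{s}{\rho}\Bigr)\Bigl(1-\frac{s}{\overline{\rho}}\Bigr)=\frac{(s-\beta)^2+\gamma^2}{|\rho|^2},
\end{equation*}
so the ratio is $\bigl((s^*-\beta)^2+\gamma^2\bigr)/\bigl((s-\beta)^2+\gamma^2\bigr)$, again positive.

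Taking logarithms of the (normally convergent) product converts it into a sum of logarithms. To justify this I will note that the pair terms satisfy
\begin{equation*}
\log\Bigl(1+\frac{(s^*-\beta)^2-(s-\beta)^2}{(s-\beta)^2+\gamma^2}\Bigr)=O\bigl(\gamma^{-2}\bigr),
\end{equation*}
which is summable by \eqref{N_K}. Hence the series of logarithms converges absolutely and equals the logarithm of the product. This yields
\begin{equation*}
\frac{s^*-s}{2}\log\delta_K+\widetilde{F}_2(s,s^*)=\sum_{\rho=\beta}\log\frac{s^*-\rho}{s-\rho}+\sum_{\substack{\rho=\beta+i\gamma\\ \gamma>0}}\log\frac{(s^*-\beta)^2+\gamma^2}{(s-\beta)^2+\gamma^2}.
\end{equation*}

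Finally, I will apply this identity at $(s,s^*)=(\sigma_0,\sigma_1)$ and at $(\sigma_0+h,\sigma_1+h)$. Both pairs satisfy $1<s<s^*$, and the shifted difference is still $\sigma_1-\sigma_0$. Subtracting $\eta$ times the second identity from the first and using \eqref{F2:def} gives the stated formula. The prefactor of $\log\delta_K$ becomes $(1-\eta)(\sigma_1-\sigma_0)/2$, and the zero sums combine termwise, which is legitimate since each converges absolutely.

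The only step needing care is the justification that the logarithm of the infinite product equals the sum of logarithms. The positivity of all factors on the real axis and the $O(\gamma^{-2})$ bound settle it.
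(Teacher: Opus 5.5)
Your proposal is correct and follows the paper's proof. You compute $\log\bigl(\xi_K(\sigma_1)/\xi_K(\sigma_0)\bigr)$ once from \eqref{Hadamard2} and once from the grouped product \eqref{pound} to get \eqref{Identity:2:2}, then subtract $\eta$ times its shifted version at $(\sigma_0+h,\sigma_1+h)$; your remarks on positivity of the factors and the $O(\gamma^{-2})$ absolute convergence of the logarithmic series just make explicit what the paper leaves implicit.
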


\begin{proof}
By the Hadamard product \eqref{Hadamard}, regrouped in
conjugate pairs, we also have
\begin{align}\label{p2:1}
\frac{\sigma_1(\sigma_1-1)\Lambda(\sigma_1)}
{\sigma_0(\sigma_0-1)\Lambda(\sigma_0)}
&=\prod_{\rho=\beta}\frac{1-\sigma_1/\rho}{1-\sigma_0/\rho}\prod_{\substack{\rho=\beta+i\gamma\\ \gamma>0}}
\frac{\left(1-\sigma_1/\rho\right)\left(1-\sigma_1/\overline{\rho}\right)}{\left(1-\sigma_0/\rho\right)\left(1-\sigma_0/\overline{\rho}\right)}\nonumber\\
&=\prod_{\rho=\beta}\frac{\sigma_1-\rho}{\sigma_0-\rho}\prod_{\substack{\rho=\beta+i\gamma\\ \gamma>0}}\frac{(\sigma_1-\beta)^2+\gamma^2}{(\sigma_0-\beta)^2+\gamma^2}.
\end{align}
Indeed, for any real $\sigma$,
\begin{equation*}
\left(1-\frac{\sigma}{\rho}\right)\left(1-\frac{\sigma}{\overline{\rho}}\right)
=\frac{(\rho-\sigma)(\overline{\rho}-\sigma)}{\rho\overline{\rho}}
=\frac{(\sigma-\beta)^2+\gamma^2}{\beta^2+\gamma^2}.
\end{equation*}
Thus, taking logarithms in \eqref{p2:1}, we obtain
\begin{equation*}\log\left(\frac{\sigma_1(\sigma_1-1)\Lambda(\sigma_1)}{\sigma_0(\sigma_0-1)\Lambda(\sigma_0)}\right)
=\sum_{\rho=\beta}\log\bigg(\frac{\sigma_1-\rho}{\sigma_0-\rho}\bigg)+\sum_{\substack{\rho=\beta+i\gamma\\ \gamma>0}}
\log\left(\frac{(\sigma_1-\beta)^2+\gamma^2}{(\sigma_0-\beta)^2+\gamma^2}\right).
\end{equation*}
Comparing the two expressions gives the following identity
\begin{equation}\label{Identity:2:2}
\frac{\sigma_1-\sigma_0}{2}\log \delta_K+\widetilde{F}_2(\sigma_0,\sigma_1)
=\sum_{\rho=\beta}\log\bigg(\frac{\sigma_1-\rho}{\sigma_0-\rho}\bigg)+\sum_{\substack{\rho=\beta+i\gamma\\ \gamma>0}}
\log\left(\frac{(\sigma_1-\beta)^2+\gamma^2}{(\sigma_0-\beta)^2+\gamma^2}\right).
\end{equation}

Subtracting $\eta$ times \eqref{Identity:2:2} with
$(\sigma_0,\sigma_1)$ replaced by
$(\sigma_0+h,\sigma_1+h)$, we obtain the result.
\end{proof}

\subsection{Third identity}

Let
\begin{equation*}
\widetilde{F}_3(s):=
-\frac{r_1}{4}\psi'\bigg(\frac{s}{2}\bigg)
-r_2\psi'(s)+\frac{1}{s^2}+\frac{1}{(s-1)^2}
-\bigg(\frac{\zeta_K'}{\zeta_K}\bigg)'(s).
\end{equation*}
The differentiated Stechkin identity \eqref{Landau-Stark-Stechkin-Derivative}
gives
\begin{align}\label{eq:F3def}
\widetilde{F}_3(s)-\eta \widetilde{F}_3(s^*)
=\ &\sum_{\rho=\beta}\left(\frac{1}{(s-\rho)^2}-\frac{\eta}{(s^*-\rho)^2}\right)
\nonumber\\
&+\sum_{\substack{\rho=\beta+i\gamma\\ \gamma>0}}\left(\frac{1}{(s-\rho)^2}+\frac{1}{(s-\overline{\rho})^2}-\frac{\eta}{(s^*-\rho)^2}-\frac{\eta}{(s^*-\overline{\rho})^2}
\right).
\end{align}

We have the following linear shift deformation identity.

\begin{lemma}\label{Identity:3}\normalfont
Let $h\geq0$ and $0\leq \eta<1$. Define
\begin{equation*}
\mathcal{F}_{3}(\sigma_2;h,\eta)
:=\widetilde{F}_3(\sigma_2)-\eta \widetilde{F}_3(\sigma_2+h).
\end{equation*}
Then
\begin{align*}
\mathcal{F}_{3}(\sigma_2;h,\eta)
=\ &\sum_{\rho=\beta}\left(\frac{1}{(\sigma_2-\rho)^2}-\frac{\eta}{(\sigma_2+h-\rho)^2}\right)\\
&+\sum_{\substack{\rho=\beta+i\gamma\\ \gamma>0}}\left[\frac{2\left((\sigma_2-\beta)^2-\gamma^2\right)}{\left(\gamma^2+(\sigma_2-\beta)^2\right)^2}
-\frac{2\eta\left((\sigma_2+h-\beta)^2-\gamma^2\right)}{\left(\gamma^2+(\sigma_2+h-\beta)^2\right)^2}\right].
\end{align*}
\end{lemma}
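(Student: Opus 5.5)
The plan is to specialise the differentiated Stechkin identity \eqref{eq:F3def} to $s=\sigma_2$ and $s^*=\sigma_2+h$, and then rewrite each conjugate-pair term in real form.

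First, the specialisation. Since $\sigma_2>1$ and $h\geq 0$, both points lie in $\{\sigma>1\}$. There the Landau--Stark identities, and hence \eqref{Landau-Stark-Stechkin-Derivative}, hold with absolutely convergent zero sums. Indeed, for $\gamma$ large,
\begin{equation*}
\frac{1}{(s-\rho)^2}=O\!\left(\frac{1}{\gamma^2}\right),
\end{equation*}
and this is summable by \eqref{N_K}. Absolute convergence justifies regrouping the zeros into real zeros and conjugate pairs, as in Lemma \ref{Hadamard:grouped}. The left-hand side of \eqref{eq:F3def} is then exactly $\mathcal{F}_3(\sigma_2;h,\eta)$ by definition.

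Second, the real-zero sum is already in the stated form after the substitution $s=\sigma_2$, $s^*=\sigma_2+h$.

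Third, for a non-real zero $\rho=\beta+i\gamma$ with $\gamma>0$ and real $x>1$, put $a=x-\beta$. Then
\begin{equation*}
\frac{1}{(x-\rho)^2}+\frac{1}{(x-\overline{\rho})^2}=2\,\Re\frac{1}{(a-i\gamma)^2}=2\,\Re\frac{(a+i\gamma)^2}{(a^2+\gamma^2)^2}=\frac{2(a^2-\gamma^2)}{(a^2+\gamma^2)^2}.
\end{equation*}
Applying this with $x=\sigma_2$ and with $x=\sigma_2+h$, the latter multiplied by $\eta$, gives the bracketed expression in the statement. Subtracting and summing over $\gamma>0$ completes the identity.

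There is no real obstacle here. The only point needing care is the convergence and regrouping justification, which follows from \eqref{N_K} exactly as in Lemma \ref{Hadamard:grouped}.
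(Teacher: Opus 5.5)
Your proposal is correct and follows the paper's proof: specialise \eqref{eq:F3def} at $s=\sigma_2$, $s^*=\sigma_2+h$, and rewrite each conjugate pair using $\frac{1}{(x-\rho)^2}+\frac{1}{(x-\overline{\rho})^2}=\frac{2((x-\beta)^2-\gamma^2)}{((x-\beta)^2+\gamma^2)^2}$ for real $x$. Your remark on absolute convergence (the terms are $O(\gamma^{-2})$, which is summable by \eqref{N_K}) is a harmless extra justification that the paper leaves implicit.
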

\begin{proof}
Observe that
\begin{equation*}
\frac{1}{(s-\rho)^2}+\frac{1}{(s-\overline{\rho})^2}-\frac{\eta}{(s^*-\rho)^2}-\frac{\eta}{(s^*-\overline{\rho})^2}
=
\frac{2\left((s-\beta)^2-\gamma^2\right)}
{\left(\gamma^2+(s-\beta)^2\right)^2}
-\frac{2\eta\left((s^*-\beta)^2-\gamma^2\right)}
{\left(\gamma^2+(s^*-\beta)^2\right)^2}.
\end{equation*}
Taking $s=\sigma_2$ and $s^*=\sigma_2+h$ in \eqref{eq:F3def} and summing over $\rho=\beta+i\gamma$ with $\gamma>0$ gives the claimed identity.
\end{proof} 

\subsection{Construction of a comparison weight}\label{weight}

Let $\bsigma=(\sigma_0,\sigma_1,\sigma_2)$. 
and define
\begin{align}\label{Phi}
\Phi_{\bsigma}(\beta,\gamma;h,\eta):=\log
\left(
\frac{(\sigma_1-\beta)^2+\gamma^2}
{(\sigma_0-\beta)^2+\gamma^2}
\right)- \frac{2(\sigma_1-\sigma_0)(\sigma_0-\beta)}
 {(\sigma_0-\beta)^2+\gamma^2}+\frac{(\sigma_1-\sigma_0)^2\left((\sigma_2-\beta)^2-\gamma^2\right)}
 {\left(\gamma^2+(\sigma_2-\beta)^2\right)^2}&\\
-\eta\log
\left(\frac{(\sigma_1+h-\beta)^2+\gamma^2}
{(\sigma_0+h-\beta)^2+\gamma^2}\right)
+\frac{2\eta(\sigma_1-\sigma_0)(\sigma_0+h-\beta)}
 {(\sigma_0+h-\beta)^2+\gamma^2}
 -\frac{\eta(\sigma_1-\sigma_0)^2\left((\sigma_2+h-\beta)^2-\gamma^2\right)}
 {\left(\gamma^2+(\sigma_2+h-\beta)^2\right)^2}\nonumber&.
\end{align}

Using Lemma \ref{Taylor}, we show that Lemma \ref{expand:1} and Lemma \ref{expand:2} are asymptotically equivalent. As the main terms cancel out, this is the reason for defining the weight $\Phi_{\bsigma}(\beta,\gamma;h,\eta)$. Indeed, it is useful to analyse the sign of the secondary coefficients, which, unlike the main term, have a chance to be negative (or, rather, having a fixed sign) within a region. 
 
 We will need the following auxiliary lemma.
\begin{lemma}\label{Taylor}\normalfont
We have the following explicit bounds.
\begin{enumerate}[label=\textbf{(\alph*)},ref=\textbf{(\alph*)},leftmargin=3em,itemsep=2pt,topsep=2pt]

\item\label{taylor:a}
\begin{equation*}
\log(1+u) =u-\frac{u^2}{2}+\frac{u^3}{3}+O^*\left(\frac{u^4}{4}\right),\qquad(0\leq  u\leq 1),
\end{equation*}

\item\label{taylor:b}
\begin{equation*}
 (1+u)^{-1} =1-u+u^2+O^*(u^3),\qquad(u\geq 0).
\end{equation*}

\item\label{taylor:c}
\begin{equation*}
 (1+u)^{-2} =1-2u+O^*(3u^2),\qquad(u\geq 0).
\end{equation*}

\item\label{taylor:d}
\begin{equation*}
(1+u)^{-3}=1+O^*(3u),\qquad(u\geq 0).
\end{equation*}

\item\label{taylor:e}
\begin{equation*}
\frac{1-u}{(1+u)^2}=1-3u+5u^2+O^*(7u^3),\qquad(u\geq 0).
\end{equation*}

\end{enumerate}
\end{lemma}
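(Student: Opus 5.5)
Each of the five bounds is an elementary Taylor estimate with an explicit remainder. The plan is the same throughout: expand the function to the stated order and show that the remainder $R(u)$ satisfies $|R(u)|\le c\,u^k$ on the stated range. For this I will use either the Lagrange form of the remainder or an exact algebraic identity for $R(u)$, and then bound a simple rational function of $u$.

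For \ref{taylor:a}, I use the integral representation
\[
\log(1+u)-u+\frac{u^2}{2}-\frac{u^3}{3}=-\int_0^u\frac{t^3}{1+t}\,dt .
\]
Since $0\le t^3/(1+t)\le t^3$ for $t\ge0$, the absolute value of the right-hand side is at most $u^4/4$. This holds for all $u\ge0$, so in particular on $[0,1]$.

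For \ref{taylor:b}, there is an exact identity:
\[
(1+u)^{-1}-(1-u+u^2)=-\frac{u^3}{1+u},
\]
whose modulus is at most $u^3$ for $u\ge 0$.

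For \ref{taylor:c}, a direct computation gives
\[
(1+u)^{-2}-1+2u=\frac{u^2(3+2u)}{(1+u)^2}.
\]
So I need $(3+2u)\le 3(1+u)^2$, which is clear for $u\ge0$ because $3+6u+3u^2-3-2u=4u+3u^2\ge0$.

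For \ref{taylor:d}, I write
\[
1-(1+u)^{-3}=\frac{(1+u)^3-1}{(1+u)^3}=\frac{u(3+3u+u^2)}{(1+u)^3}.
\]
This is nonnegative, and it is at most $3u$ because $3+3u+u^2\le 3(1+u)^3$.

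For \ref{taylor:e}, I multiply out $(1+u)^2(1-3u+5u^2)$ and subtract $(1-u)$. This gives the exact remainder
\[
\frac{1-u}{(1+u)^2}-(1-3u+5u^2)=-\frac{u^3(7+5u)}{(1+u)^2},
\]
which I will double-check by expansion. Its modulus is at most $7u^3$ since $7+5u\le 7(1+u)^2$.

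The only step needing real care is \ref{taylor:e}, where the polynomial multiplication must be done correctly to identify the exact remainder numerator. Every other step is a one-line inequality between polynomials with nonnegative coefficients on $u\ge0$.
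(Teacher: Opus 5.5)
Your proof is correct and takes essentially the paper's approach for (b)--(e): the paper uses the same exact remainders $-u^3/(1+u)$, $u^2(3+2u)/(1+u)^2$, $u(3+3u+u^2)/(1+u)^3$ and $-u^3(7+5u)/(1+u)^2$, and your identity in (e) checks out, since $(1+u)^2(1-3u+5u^2)=1-u+7u^3+5u^4$. For (a) the paper uses the alternating-series bound, which needs $0\le u\le 1$; your integral remainder $-\int_0^u t^3/(1+t)\,dt$ is just as short and gives the estimate for all $u\ge 0$.
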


\begin{proof}
\ref{taylor:a}. For $|u|\leq 1$, $u\neq -1$, we have the expansion
\begin{equation*}
 \log(1+u)=-\sum_{n\geq1}\frac{(-u)^n}{n}.
\end{equation*}
For $0\leq  u\leq 1$, the terms $u^n/n$ decrease monotonically to $0$. Since we have an alternating series, it follows that
\begin{equation*}
 \bigg| \log(1+u) -\left(u-\frac{u^2}{2}+\frac{u^3}{3}\right)\bigg|\leq \frac{u^4}{4}.
\end{equation*}
For \ref{taylor:b}, we observe that
\begin{equation*}
\bigg|\frac{1}{1+u}-(1-u+u^2)\bigg|=\bigg|-\frac{u^3}{1+u}\bigg|\leq  u^3\qquad(u\geq 0).
\end{equation*}
\ref{taylor:c}. We have
\begin{equation*}
 \bigg|\frac{1}{(1+u)^2}-(1-2u)\bigg|=\bigg|u^2\frac{3+2u}{(1+u)^2}\bigg|\leq  3u^2\qquad(u\geq 0).
\end{equation*}
\ref{taylor:d}. We have
\begin{equation*}
 \bigg|\frac{1}{(1+u)^3}-1\bigg|=\bigg|u\frac{3+3u+u^2}{(1+u)^3}\bigg|\leq  3u\qquad(u\geq 0).
\end{equation*}
Finally, for \ref{taylor:e}, a direct calculation gives
\begin{equation*}
\bigg|\frac{1-u}{(1+u)^2}-(1-3u+5u^2)\bigg|= \bigg|u^3\frac{7+5u}{(1+u)^2}\bigg|\leq  7u^3\qquad(u\geq 0).
\end{equation*}
\end{proof}
Let $\sigma_1>\sigma_0>1$ and $\sigma_2>1$. 
Observe first that the complex zeta zero contribution in Lemma \ref{Identity:3} satisfies 
\begin{equation}\label{IDD:1}
\sum_{\substack{\rho=\beta+i\gamma\\ \gamma>0}}\bigg[\frac{2\left((\sigma_2-\beta)^2-\gamma^2\right)}
{\left(\gamma^2+(\sigma_2-\beta)^2\right)^2}
-\frac{2\eta\left((\sigma_2+h-\beta)^2-\gamma^2\right)}
{\left(\gamma^2+(\sigma_2+h-\beta)^2\right)^2}
\bigg] =
\sum_{\substack{\rho=\beta+i\gamma\\ \gamma>0}}\bigg(-\frac{2(1-\eta)}{\gamma^2}
 +
 O_{\sigma_2,h,\eta}\left(\frac{1}{\gamma^4}\right)\bigg).
\end{equation}
On the other hand, multiplying the identity of Lemma \ref{Identity:1} by $\sigma_1-\sigma_0$ and substituting into
identity of Lemma \ref{Identity:2}, we obtain
\begin{align}\label{IDD:2}
(\sigma_1-\sigma_0)\mathcal{F}_1(\sigma_0;h,\eta)
&+\mathcal{F}_{2}(\sigma_0,\sigma_1;h,\eta)\nonumber\\
=\ &\sum_{\rho=\beta}\left[\log\left(\frac{\sigma_1-\rho}{\sigma_0-\rho}\right)-
\eta\log\left(\frac{\sigma_1+h-\rho}{\sigma_0+h-\rho}\right)-\frac{\sigma_1-\sigma_0}{\sigma_0-\rho}
+\frac{\eta(\sigma_1-\sigma_0)}{\sigma_0+h-\rho}\right]\nonumber\\
&+\sum_{\substack{\rho=\beta+i\gamma\\ \gamma>0}}\bigg[
\log\left(\frac{(\sigma_1-\beta)^2+\gamma^2}{(\sigma_0-\beta)^2+\gamma^2}\right)
-\eta\log\left(\frac{(\sigma_1+h-\beta)^2+\gamma^2}{(\sigma_0+h-\beta)^2+\gamma^2}\right)\nonumber\\
&\qquad-\frac{2(\sigma_0-\beta)(\sigma_1-\sigma_0)}{(\sigma_0-\beta)^2+\gamma^2}+\frac{2\eta(\sigma_0+h-\beta)(\sigma_1-\sigma_0)}{(\sigma_0+h-\beta)^2+\gamma^2}
\bigg].
\end{align}
The reason for doing the above is that the second term in \eqref{IDD:2}, the complex zeta zero summation, removes
the linear term in $(\sigma_1-\sigma_0)$ in the Taylor expansion of the logarithm, leaving a 
leading term that is
quadratic in $\sigma_1-\sigma_0$.
Indeed, we have the following result.
\begin{lemma}\label{expand:1}\normalfont
Let $\sigma_1>\sigma_0>1$, $0<\beta<1$, $0\leq \eta<1$, $h\geq 0$ and let $\gamma>0$ be sufficiently large so that 
\begin{equation*}
0<\frac{2(\sigma_0-\beta)(\sigma_1-\sigma_0)+(\sigma_1-\sigma_0)^2}{(\sigma_0-\beta)^2+\gamma^2}\leq  1,\qquad
0<\frac{2(\sigma_0+h-\beta)(\sigma_1-\sigma_0)+(\sigma_1-\sigma_0)^2}
{(\sigma_0+h-\beta)^2+\gamma^2}
\leq  1.
\end{equation*}
Define $\widetilde{C}_4=\widetilde{C}_4(\sigma_0,\sigma_1;\beta)$, $\widetilde{C}_6=\widetilde{C}_6(\sigma_0,\sigma_1;\beta)$ and $\widetilde{C}_8=\widetilde{C}_8(\sigma_0,\sigma_1;\beta)$ as
\begin{align*}
\widetilde{C}_4&=3(\sigma_0-\beta)^2 +2(\sigma_0-\beta)(\sigma_1-\sigma_0)+\frac{1}{2}(\sigma_1-\sigma_0)^2,\\
\widetilde{C}_6&=5(\sigma_0-\beta)^4+ \frac{20}{3}(\sigma_0-\beta)^3(\sigma_1-\sigma_0) +5(\sigma_0-\beta)^2(\sigma_1-\sigma_0)^2\\
&\qquad+2(\sigma_0-\beta)(\sigma_1-\sigma_0)^3+\frac{1}{3}(\sigma_1-\sigma_0)^4,\\
\widetilde{C}_8&= (\sigma_0+\sigma_1-2\beta)\bigg[(\sigma_0-\beta)^6+\frac{3}{2}(\sigma_1-\sigma_0)(\sigma_0+\sigma_1-2\beta)(\sigma_0-\beta)^4\\
&\qquad+(\sigma_1-\sigma_0)^2(\sigma_0+\sigma_1-2\beta)^2(\sigma_0-\beta)^2
+\frac{1}{4}(\sigma_1-\sigma_0)^3(\sigma_0+\sigma_1-2\beta)^3\bigg]+2(\sigma_0-\beta)^7.
\end{align*}
and $C_4=C_4(\sigma_0,\sigma_1;\beta;h,\eta)$, $C_6=C_6(\sigma_0,\sigma_1;\beta;h,\eta)$, $C_8=C_8(\sigma_0,\sigma_1;\beta;h,\eta)$ by
\begin{align*}
C_4(\sigma_0,\sigma_1;\beta;h,\eta)&=\widetilde{C}_4(\sigma_0,\sigma_1;\beta)-\eta\widetilde{C}_4(\sigma_0+h,\sigma_1+h;\beta),\\
C_6(\sigma_0,\sigma_1;\beta;h,\eta)&=\widetilde{C}_6(\sigma_0,\sigma_1;\beta)-\eta\widetilde{C}_6(\sigma_0+h,\sigma_1+h;\beta),\\
C_8(\sigma_0,\sigma_1;\beta;h,\eta)&=\widetilde{C}_8(\sigma_0,\sigma_1;\beta)+\eta\widetilde{C}_8(\sigma_0+h,\sigma_1+h;\beta).
\end{align*}
Then
\begin{align*}
&\log\left(\frac{(\sigma_1-\beta)^2+\gamma^2}{(\sigma_0-\beta)^2+\gamma^2}\right)-\eta\log\left(\frac{(\sigma_1+h-\beta)^2+\gamma^2}{(\sigma_0+h-\beta)^2+\gamma^2}\right)\nonumber\\
&\qquad\qquad\qquad-
 \frac{2(\sigma_0-\beta)(\sigma_1-\sigma_0)}{(\sigma_0-\beta)^2+\gamma^2}+\frac{2\eta(\sigma_0+h-\beta)(\sigma_1-\sigma_0)}{(\sigma_0+h-\beta)^2+\gamma^2}\\
=\ &\frac{(1-\eta)(\sigma_1-\sigma_0)^2}{\gamma^2}
-\frac{(\sigma_1-\sigma_0)^2C_4}{\gamma^4}+
\frac{(\sigma_1-\sigma_0)^2C_6}{\gamma^6}
+O^*\bigg(\frac{(\sigma_1-\sigma_0)C_8}{\gamma^8}\bigg).
\end{align*}
\end{lemma}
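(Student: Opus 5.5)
The proof reduces to the case $\eta=0$. Write $a=\sigma_0-\beta$, $d=\sigma_1-\sigma_0$, and
\[
G(a)=\log\!\left(\frac{(a+d)^2+\gamma^2}{a^2+\gamma^2}\right)-\frac{2ad}{a^2+\gamma^2}.
\]
The left-hand side of the statement is $G(\sigma_0-\beta)-\eta\,G(\sigma_0+h-\beta)$, since the second pair of terms is exactly $G$ evaluated at the shifted point $a+h$ with the same $d$. By definition, $C_4$ and $C_6$ are $\widetilde C_4,\widetilde C_6$ minus $\eta$ times their shifts. The error terms add in absolute value, which explains the plus sign in $C_8$. So it suffices to prove the unshifted expansion
\[
G(a)=\frac{d^2}{\gamma^2}-\frac{d^2\widetilde C_4}{\gamma^4}+\frac{d^2\widetilde C_6}{\gamma^6}+O^*\!\left(\frac{d\,\widetilde C_8}{\gamma^8}\right),
\]
valid whenever $0<u:=(2ad+d^2)/(a^2+\gamma^2)\le 1$. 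This condition is precisely the hypothesis, for $a$ and for $a+h$.

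Next I expand the two pieces of $G$. Since $(a+d)^2+\gamma^2=(a^2+\gamma^2)(1+u)$, Lemma~\ref{Taylor}\ref{taylor:a} gives $\log(1+u)=u-u^2/2+u^3/3+O^*(u^4/4)$. Subtracting $2ad/(a^2+\gamma^2)$ cancels the linear-in-$d$ part of $u$, leaving
\[
G=\frac{d^2}{a^2+\gamma^2}-\frac{u^2}{2}+\frac{u^3}{3}+O^*\!\left(\frac{u^4}{4}\right).
\]
I then expand each power $(a^2+\gamma^2)^{-k}$ in powers of $v=a^2/\gamma^2$ using Lemma~\ref{Taylor}\ref{taylor:b}--\ref{taylor:d}. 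Explicitly, $(a^2+\gamma^2)^{-1}=\gamma^{-2}(1-v+v^2+O^*(v^3))$, $(a^2+\gamma^2)^{-2}=\gamma^{-4}(1-2v+O^*(3v^2))$, and $(a^2+\gamma^2)^{-3}=\gamma^{-6}(1+O^*(3v))$. In the $u^4$ term I simply bound $(a^2+\gamma^2)^{-4}\le\gamma^{-8}$.

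Then I collect coefficients. At order $\gamma^{-4}$, the terms $-d^2a^2$ and $-\tfrac12(2ad+d^2)^2$ combine to $-d^2(3a^2+2ad+\tfrac12 d^2)=-d^2\widetilde C_4$. At order $\gamma^{-6}$, I gather $d^2a^4$, $+(2ad+d^2)^2a^2$ and $\tfrac13(2ad+d^2)^3$. Expanding gives $d^2\big(5a^4+\tfrac{20}{3}a^3d+5a^2d^2+2ad^3+\tfrac13 d^4\big)=d^2\widetilde C_6$, which checks. All remaining pieces are $O(\gamma^{-8})$. These are the $v^3$ remainder of the first term, the $3v^2$ remainder from the $u^2$ term, the $3v$ remainder from the $u^3$ term, and $u^4/4$. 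Each is bounded in absolute value, with a common factor $d$ pulled out using $2ad+d^2=d(2a+d)$ and $2a+d=\sigma_0+\sigma_1-2\beta$.

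The main obstacle is this last step: matching the accumulated error bound exactly to the stated $\widetilde C_8$. The candidate contributions are $d^2a^6$, $\tfrac32 d^2(2a+d)^2a^4$, $d^3(2a+d)^3a^2$ and $\tfrac14 d^4(2a+d)^4$. Factoring out $d(2a+d)$ reproduces the bracket in $\widetilde C_8$, once one uses $d\le 2a+d$ (as $a>0$) to bound $d^2a^6\le d(2a+d)a^6$. The extra term $2a^7$ is where I expect sign or bookkeeping issues. I would check it carefully, since a crude bound such as $d\le 2a+d$ may generate it, and absorb it with room to spare, as all quantities are positive. A cruder but safe version of $\widetilde C_8$ suffices for later use, but I would aim to reproduce the stated form.
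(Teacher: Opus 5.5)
Your proof is correct and is essentially the paper's argument: Lemma \ref{Taylor}\ref{taylor:a} for the logarithm, \ref{taylor:b}--\ref{taylor:d} for the powers of $(a^2+\gamma^2)^{-1}$, collecting coefficients, and handling the $\eta$-shift by linearity. The only difference is that you cancel $2ad/(a^2+\gamma^2)$ against the linear part of $u$ exactly, whereas the paper expands $\log(1+u)$ and $2ad/(a^2+\gamma^2)$ separately (coefficients $A_i$, $B_i$) and adds their remainders. That also explains the term you were unsure of: $2(\sigma_0-\beta)^7=2a^7$ is the remainder $O^*(2a\cdot d\,a^6/\gamma^8)$ from expanding the subtracted linear term. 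Likewise, the paper's first remainder $d(2a+d)a^6$ arises directly, not via $d\le 2a+d$. So your remainder is strictly smaller than $d\,\widetilde C_8/\gamma^8$, and the stated bound follows by positivity, as you said.
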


\begin{proof}
We have
\begin{align*}
 \log\bigg(\frac{(\sigma_1-\beta)^2+\gamma^2}{(\sigma_0-\beta)^2+\gamma^2}\bigg)
 &=
 \log\bigg(1+ \frac{2(\sigma_0-\beta)(\sigma_1-\sigma_0)+(\sigma_1-\sigma_0)^2}{(\sigma_0-\beta)^2+\gamma^2} \bigg),\\
 -\eta\log\bigg(\frac{(\sigma_1+h-\beta)^2+\gamma^2}{(\sigma_0+h-\beta)^2+\gamma^2}\bigg)
 &=-\eta\log \bigg(1+ \frac{2(\sigma_0+h-\beta)(\sigma_1-\sigma_0)+(\sigma_1-\sigma_0)^2}{(\sigma_0+h-\beta)^2+\gamma^2} \bigg).
\end{align*}
By Lemma \ref{Taylor}\ref{taylor:a},
\begin{align*}
&\log
 \left(
 1+
 \frac{2(\sigma_0-\beta)(\sigma_1-\sigma_0)+(\sigma_1-\sigma_0)^2}{(\sigma_0-\beta)^2+\gamma^2}
 \right)
\\
&=
\frac{2(\sigma_0-\beta)(\sigma_1-\sigma_0)+(\sigma_1-\sigma_0)^2}  {(\sigma_0-\beta)^2+\gamma^2}-\frac{1}{2}
\left(\frac{2(\sigma_0-\beta)(\sigma_1-\sigma_0) +(\sigma_1-\sigma_0)^2}{(\sigma_0-\beta)^2+\gamma^2}\right)^2
\\
&\quad
+\frac{1}{3}\left(\frac{2(\sigma_0-\beta)(\sigma_1-\sigma_0)+ (\sigma_1-\sigma_0)^2} {(\sigma_0-\beta)^2+\gamma^2}
\right)^3+O^*\bigg(\frac{1}{4}\bigg(\frac{2(\sigma_0-\beta)(\sigma_1-\sigma_0)+(\sigma_1-\sigma_0)^2 }{(\sigma_0-\beta)^2+\gamma^2}
\bigg)^4\bigg).
\end{align*}
We expand every denominator in powers of $\gamma^{-2}$ using
\begin{equation*}
\frac{1}{(\sigma_0-\beta)^2+\gamma^2}=\frac{1}{\gamma^2}
   \left(1+\frac{(\sigma_0-\beta)^2}{\gamma^2}\right)^{-1}.
\end{equation*}
More precisely, thanks to Lemma \ref{Taylor}\ref{taylor:b},\ref{taylor:c},\ref{taylor:d}, we have
\begin{align}\label{taylors}\frac{1}{(\sigma_0-\beta)^2+\gamma^2}
 &=\frac{1}{\gamma^2}- \frac{(\sigma_0-\beta)^2}{\gamma^4}+\frac{(\sigma_0-\beta)^4}{\gamma^6}+
O^*\left(\frac{(\sigma_0-\beta)^6}{\gamma^8}\right)\\
\frac{1}{\left((\sigma_0-\beta)^2+\gamma^2\right)^2}&= \frac{1}{\gamma^4}-\frac{2(\sigma_0-\beta)^2}{\gamma^6}
+ O^*\left(\frac{3(\sigma_0-\beta)^4}{\gamma^8}\right),\nonumber\\
 \frac{1}{\left((\sigma_0-\beta)^2+\gamma^2\right)^3}&=
\frac{1}{\gamma^6}+O^*\left(\frac{3(\sigma_0-\beta)^2}{\gamma^8}\right).\nonumber
\end{align}
Therefore,
\begin{align}\label{series:1}
\frac{ 2(\sigma_0-\beta)(\sigma_1-\sigma_0) +(\sigma_1-\sigma_0)^2}{(\sigma_0-\beta)^2+\gamma^2}&=
\frac{2(\sigma_0-\beta)(\sigma_1-\sigma_0)+ (\sigma_1-\sigma_0)^2}{\gamma^2}
\nonumber\\
&\quad
-
\frac{ \left(2(\sigma_0-\beta)(\sigma_1-\sigma_0)+
 (\sigma_1-\sigma_0)^2\right)(\sigma_0-\beta)^2}{\gamma^4}
\nonumber\\
&\quad+\frac{ \left(2(\sigma_0-\beta)(\sigma_1-\sigma_0)+(\sigma_1-\sigma_0)^2\right)(\sigma_0-\beta)^4}{\gamma^6}
\nonumber\\
&\quad
+O^*\left(\frac{ \left(2(\sigma_0-\beta)(\sigma_1-\sigma_0)+(\sigma_1-\sigma_0)^2\right)(\sigma_0-\beta)^6 }{\gamma^8}
\right),
\end{align}
while
\begin{align}\label{series:2}
\frac{1}{2}
\left(\frac{2(\sigma_0-\beta)(\sigma_1-\sigma_0) +(\sigma_1-\sigma_0)^2} {(\sigma_0-\beta)^2+\gamma^2 }
\right)^2
&=\frac{  \left(2(\sigma_0-\beta)(\sigma_1-\sigma_0)+(\sigma_1-\sigma_0)^2\right)^2}{2\gamma^4}
\nonumber\\
&\quad
-\frac{\left(2(\sigma_0-\beta)(\sigma_1-\sigma_0)+ (\sigma_1-\sigma_0)^2  \right)^2(\sigma_0-\beta)^2}{\gamma^6}
\nonumber\\
&\quad
+O^*\left(\frac{3\left(2(\sigma_0-\beta)(\sigma_1-\sigma_0)+ (\sigma_1-\sigma_0)^2\right)^2(\sigma_0-\beta)^4}{2\gamma^8}
\right),
\end{align}
and
\begin{align}\label{series:3}
\frac{1}{3}\left(\frac{2(\sigma_0-\beta)(\sigma_1-\sigma_0)+(\sigma_1-\sigma_0)^2}{  (\sigma_0-\beta)^2+\gamma^2}\right)^3
&=\frac{\left(2(\sigma_0-\beta)(\sigma_1-\sigma_0)+ (\sigma_1-\sigma_0)^2\right)^3}{3\gamma^6}
\nonumber\\
&\quad+O^*\left(\frac{ \left(2(\sigma_0-\beta)(\sigma_1-\sigma_0)+(\sigma_1-\sigma_0)^2\right)^3(\sigma_0-\beta)^2}{\gamma^8}
\right).
\end{align}
Finally, the Taylor remainder satisfies
\begin{equation}\label{series:4}
O^*\left(\frac{1}{4}\left(\frac{ 2(\sigma_0-\beta)(\sigma_1-\sigma_0) +(\sigma_1-\sigma_0)^2}{ (\sigma_0-\beta)^2+\gamma^2}\right)^4\right)=
O^*\left(\frac{ \left(2(\sigma_0-\beta)(\sigma_1-\sigma_0)+(\sigma_1-\sigma_0)^2\right)^4}{4\gamma^8}
\right).
\end{equation}
Observe that the above estimates hold if we replace $(\sigma_0,\sigma_1)$ by $(\sigma_0+h,\sigma_1+h)$. Combining the preceding four expansions \eqref{series:1},  \eqref{series:2},  \eqref{series:3} and  \eqref{series:4} gives
\begin{align}\label{expansion:1}
 \log\bigg(1+ \frac{2(\sigma_0-\beta)(\sigma_1-\sigma_0)+(\sigma_1-\sigma_0)^2}{(\sigma_0-\beta)^2+\gamma^2} \bigg)
 -\eta\log\bigg(1+ \frac{2(\sigma_0+h-\beta)(\sigma_1-\sigma_0)+(\sigma_1-\sigma_0)^2}{(\sigma_0+h-\beta)^2+\gamma^2} \bigg)&\nonumber\\
 \qquad=(\sigma_1-\sigma_0)\bigg[\frac{(1-\eta)(\sigma_0+\sigma_1-2\beta)-2\eta h}{\gamma^2}
 -\frac{A_4}{\gamma^4}+\frac{A_6}{\gamma^6}+O^*\bigg(\frac{A_8}{\gamma^8}\bigg)\bigg],\qquad&
\end{align}
where $A_4=A_4(\sigma_0,\sigma_1;\beta;h,\eta)$,  $A_6=A_6(\sigma_0,\sigma_1;\beta;h,\eta)$ and  $A_8=A_8(\sigma_0,\sigma_1;\beta;h,\eta)$ are defined as
\begin{align*}
A_4(\sigma_0,\sigma_1;\beta;h,\eta)&=\widetilde{A}_4(\sigma_0,\sigma_1;\beta)-\eta\widetilde{A}_4(\sigma_0+h,\sigma_1+h;\beta),\\
A_6(\sigma_0,\sigma_1;\beta;h,\eta)&=\widetilde{A}_6(\sigma_0,\sigma_1;\beta)-\eta\widetilde{A}_6(\sigma_0+h,\sigma_1+h;\beta),\\
A_8(\sigma_0,\sigma_1;\beta;h,\eta)&=\widetilde{A}_8(\sigma_0,\sigma_1;\beta)+\eta\widetilde{A}_8(\sigma_0+h,\sigma_1+h;\beta),
\end{align*}
with
\begin{align*}
\widetilde{A}_4&=(\sigma_0+\sigma_1-2\beta)\bigg[(\sigma_0-\beta)^2+ \frac{1}{2}(\sigma_1-\sigma_0)
   \left(\sigma_0+\sigma_1-2\beta\right)\bigg],\\
\widetilde{A}_6&=(\sigma_0+\sigma_1-2\beta) \bigg[(\sigma_0-\beta)^4
+ (\sigma_1-\sigma_0)( \sigma_0+\sigma_1-2\beta)(\sigma_0-\beta)^2
    +\frac{1}{3}(\sigma_1-\sigma_0)^2 \left( \sigma_0+\sigma_1-2\beta \right)^2\bigg],\\
\widetilde{A}_8&=(\sigma_0+\sigma_1-2\beta)\bigg[(\sigma_0-\beta)^6
+ \frac{3}{2}
        (\sigma_1-\sigma_0)(\sigma_0+\sigma_1-2\beta)(\sigma_0-\beta)^4\\
&\qquad\qquad\qquad\qquad+
        (\sigma_1-\sigma_0)^2(\sigma_0+\sigma_1-2\beta)^2(\sigma_0-\beta)^2
+ \frac{1}{4}(\sigma_1-\sigma_0)^3(\sigma_0+\sigma_1-2\beta)^3\bigg].
\end{align*}
On the other hand, by considering expressions \eqref{taylors} with $\sigma_0$ and also $\sigma_0+h$, we derive
\begin{align}\label{expansion:2}
 \frac{2(\sigma_0-\beta)(\sigma_1-\sigma_0)}{(\sigma_0-\beta)^2+\gamma^2}&-\frac{2\eta(\sigma_0+h-\beta)(\sigma_1-\sigma_0)}
 {(\sigma_0+h-\beta)^2+\gamma^2}\nonumber\\
&=(\sigma_1-\sigma_0)\bigg[
        \frac{2(1-\eta)(\sigma_0-\beta)-2\eta h}{\gamma^2}
         -\frac{B_4}{\gamma^4} +
        \frac{B_6}{\gamma^6}+
        O^*\left(\frac{B_8}{\gamma^8}\right)\bigg],
\end{align}
where $B_4=B_4(\sigma_0;\beta;h,\eta)$, $B_6=B_6(\sigma_0;\beta;h,\eta)$ and $B_8=B_8(\sigma_0;\beta;h,\eta)$ are defined as
\begin{align*}
 B_4&=2(\sigma_0-\beta)^3-2\eta(\sigma_0+h-\beta)^3,\\
B_6&=2(\sigma_0-\beta)^5-2\eta(\sigma_0+h-\beta)^5,\\
B_8&=2(\sigma_0-\beta)^7+2\eta(\sigma_0+h-\beta)^7.
\end{align*}
Observe now that
\begin{align*}
A_4-B_4&=(\sigma_1-\sigma_0)C_4,\\
A_6-B_6&=(\sigma_1-\sigma_0)C_6,\\
A_8+B_8&=C_8.
\end{align*}
Subtracting the two expansions \eqref{expansion:1} and \eqref{expansion:2} gives the result.
\end{proof}

Thanks to Lemma \ref{expand:1} and \ref{expand:2}, identities \eqref{IDD:1} and \eqref{IDD:2} are asymptotically equivalent if we multiply identity \eqref{IDD:1} by $\frac{1}{2}(\sigma_1-\sigma_0)^2$. More precisely, we have the following result.
\begin{lemma}\label{expand:2}\normalfont
Let $\sigma_2>1$, $\sigma_1>\sigma_0>1$, $0<\beta<1$,
$h\ge0$, $0\le\eta<1$, and $\gamma>0$. Then
\begin{align*}
&\frac{(\sigma_1-\sigma_0)^2}{2}\bigg[\frac{2\left((\sigma_2-\beta)^2-\gamma^2\right)}
{\left(\gamma^2+(\sigma_2-\beta)^2\right)^2}
-\frac{2\eta\left((\sigma_2+h-\beta)^2-\gamma^2\right)}
{\left(\gamma^2+(\sigma_2+h-\beta)^2\right)^2}\bigg]\\&
\qquad=-\frac{(1-\eta)(\sigma_1-\sigma_0)^2}{\gamma^2}+\frac{(\sigma_1-\sigma_0)^2D_4}{\gamma^4}-\frac{(\sigma_1-\sigma_0)^2D_6}{\gamma^6}
+O^*\left(\frac{(\sigma_1-\sigma_0)D_8}{\gamma^8}\right).
\end{align*}
where $D_4=D_4(\sigma_2;\beta;h,\eta)$, $D_6=D_6(\sigma_2;\beta;h,\eta)$ and $D_8=D_8(\bsigma;\beta;h,\eta)$ are defined as
\begin{align*}
D_4&=3(\sigma_2-\beta)^2-3\eta(\sigma_2+h-\beta)^2,\\
D_6&=5(\sigma_2-\beta)^4-5\eta(\sigma_2+h-\beta)^4,\\
D_8&=7(\sigma_1-\sigma_0)(\sigma_2-\beta)^6+7\eta(\sigma_1-\sigma_0)(\sigma_2+h-\beta)^6.
\end{align*}
\end{lemma}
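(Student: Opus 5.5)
Write $x=\sigma_2-\beta>0$ and, for the shifted term, $x_h=\sigma_2+h-\beta$. The plan is to expand the single expression
\begin{equation*}
\frac{x^2-\gamma^2}{(\gamma^2+x^2)^2}
\end{equation*}
in powers of $\gamma^{-2}$ with an explicit remainder, do the same with $x$ replaced by $x_h$, and then combine. The bracket in the statement equals twice this expression minus $2\eta$ times its shifted copy. After multiplying by $\tfrac12(\sigma_1-\sigma_0)^2$, the left-hand side becomes
\begin{equation*}
(\sigma_1-\sigma_0)^2\left[\frac{x^2-\gamma^2}{(\gamma^2+x^2)^2}-\eta\,\frac{x_h^2-\gamma^2}{(\gamma^2+x_h^2)^2}\right].
\end{equation*}

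For the single-term expansion, factor out $\gamma^{-2}$ and set $u=x^2/\gamma^2\ge 0$. Then
\begin{equation*}
\frac{x^2-\gamma^2}{(\gamma^2+x^2)^2}=-\frac{1}{\gamma^2}\cdot\frac{1-u}{(1+u)^2}.
\end{equation*}
Lemma \ref{Taylor}\ref{taylor:e} is tailored to exactly this. It gives
\begin{equation*}
\frac{1-u}{(1+u)^2}=1-3u+5u^2+O^*(7u^3),\qquad u\ge0.
\end{equation*}
Consequently,
\begin{equation*}
\frac{x^2-\gamma^2}{(\gamma^2+x^2)^2}=-\frac{1}{\gamma^2}+\frac{3x^2}{\gamma^4}-\frac{5x^4}{\gamma^6}+O^*\!\left(\frac{7x^6}{\gamma^8}\right).
\end{equation*}
This holds for every $\gamma>0$, with no smallness condition needed, because \ref{taylor:e} is valid on all of $u\ge0$. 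This is why the statement only requires $\gamma>0$, unlike Lemma \ref{expand:1}. The same identity holds with $x$ replaced by $x_h$, since $x_h>0$ as well.

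Next, subtract $\eta$ times the shifted expansion and collect coefficients. The $\gamma^{-2}$ term gives $-(1-\eta)/\gamma^2$. The $\gamma^{-4}$ term gives $\bigl(3x^2-3\eta x_h^2\bigr)/\gamma^4=D_4/\gamma^4$. The $\gamma^{-6}$ term gives $-\bigl(5x^4-5\eta x_h^4\bigr)/\gamma^6=-D_6/\gamma^6$.

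The two error terms are combined by the triangle inequality, so the $\eta$ enters with a plus sign. This produces the remainder
\begin{equation*}
O^*\!\left(\frac{7x^6+7\eta x_h^6}{\gamma^8}\right),
\end{equation*}
which matches the sign pattern of $B_8$ and $C_8$ in Lemma \ref{expand:1}.

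Finally, multiply everything by $(\sigma_1-\sigma_0)^2$. The main terms become those in the statement. In the remainder, one factor $(\sigma_1-\sigma_0)$ is absorbed into $D_8$, which by definition already carries a factor $(\sigma_1-\sigma_0)$. The remainder then reads $O^*\bigl((\sigma_1-\sigma_0)D_8/\gamma^8\bigr)$, as claimed.

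There is no real obstacle in this argument. The only points needing care are verifying \ref{taylor:e}, which is already established, and the bookkeeping of signs: the overall minus sign from factoring out $-\gamma^{-2}$, and the fact that the error terms add while the main terms subtract.
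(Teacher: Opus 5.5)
Your proposal is correct and follows essentially the same route as the paper's proof. Both write each term as $-\gamma^{-2}(1-u)/(1+u)^2$ with $u=(\sigma_2-\beta)^2/\gamma^2$ (respectively $u=(\sigma_2+h-\beta)^2/\gamma^2$), apply Lemma~\ref{Taylor}\ref{taylor:e}, and combine the remainders using $\eta\ge 0$. Both then multiply by $\tfrac12(\sigma_1-\sigma_0)^2$ and absorb one factor of $\sigma_1-\sigma_0$ into $D_8$.
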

\begin{proof}
We have the following identities
\begin{align*}
\frac{2\left((\sigma_2-\beta)^2-\gamma^2\right)}
     {\left(\gamma^2+(\sigma_2-\beta)^2\right)^2}
&=-\frac{2}{\gamma^2}\frac{1-\frac{(\sigma_2-\beta)^2}{\gamma^2}}{
\left(1+\frac{(\sigma_2-\beta)^2}{\gamma^2}\right)^2},\\
\frac{2\left((\sigma_2+h-\beta)^2-\gamma^2\right)}{\left(\gamma^2+(\sigma_2+h-\beta)^2\right)^2}
&=-\frac{2}{\gamma^2}\frac{1-\frac{(\sigma_2+h-\beta)^2}{\gamma^2}}{\left(1+\frac{(\sigma_2+h-\beta)^2}{\gamma^2}\right)^2}.
\end{align*}
Applying Lemma \ref{Taylor}\ref{taylor:e} with $u=\frac{(\sigma_2-\beta)^2}{\gamma^2}$, $u=\frac{(\sigma_2+h-\beta)^2}{\gamma^2}$ and multiplying by $\frac{1}{2}(\sigma_1-\sigma_0)^2$, we derive the result.
\end{proof}

\section{A symmetrised weight}\label{ton}
We have the following identity on the zeros of the Dedekind zeta function $\zeta_K$
\begin{align}
\label{Identity:5}
& \sum_{\substack{\rho=\beta+i\gamma\\ \gamma>0}}\Phi_{\bsigma}(\beta,\gamma;h,\eta)
+\frac{1}{2}\sum_{\substack{\rho=\beta\\}}\Phi_{\bsigma}(\beta,0;h,\eta)\nonumber\\
&\qquad\qquad= (\sigma_1-\sigma_0) 
\mathcal{F}_1(\sigma_0;h,\eta)
 +
 \mathcal{F}_2(\sigma_0,\sigma_1;h,\eta)+\frac{1}{2}(\sigma_1-\sigma_0)^2\mathcal{F}_3(\sigma_2;h,\eta).
\end{align}
We now use the symmetry of the non-trivial zeros under $\rho\longmapsto 1-\rho$.
For $\frac{1}{2}\leq  \beta<1$ and $\gamma\geq 0$, define the symmetrised function
\begin{equation}
\label{Omega}
\Omega_{\bsigma}(\beta,\gamma;h,\eta):=
\Phi_{\bsigma}(\beta,\gamma;h,\eta) + \Phi_{\bsigma}(1-\beta,\gamma;h,\eta).
\end{equation}
In other words, $\Omega_{\bsigma}$ is the contribution of the pair of
zeros $\beta+i\gamma$, $1-\beta+i\gamma$,
after the conjugate zeros have already been grouped.

With this convention, the identity \eqref{Identity:5} may be rewritten
as
\begin{align}\label{eq:Phi-sym-identity}
&\sum_{\substack{\rho=\beta+i\gamma\\ \frac{1}{2}<\beta<1,\ \gamma>0}}
\Omega_{\bsigma}(\beta,\gamma;h,\eta)+\frac{1}{2}
\sum_{\substack{\rho=\frac{1}{2}+i\gamma\\ \gamma>0}}
\Omega_{\bsigma}\!\left(\frac{1}{2},\gamma;h,\eta\right)
\nonumber\\
&\qquad\qquad\qquad
+\frac{1}{2}\sum_{\substack{\rho=\beta\\
\frac{1}{2}<\beta<1}}\Omega_{\bsigma}(\beta,0;h,\eta)
+\frac{1}{4}\sum_{\substack{\rho=\frac{1}{2}}}
\Omega_{\bsigma}\!\left(\frac{1}{2},0;h,\eta\right)\nonumber\\
&\qquad=(\sigma_1-\sigma_0)
\mathcal{F}_1(\sigma_0;h,\eta)+
\mathcal{F}_2(\sigma_0,\sigma_1;h,\eta)+
\frac{1}{2}(\sigma_1-\sigma_0)^2
\mathcal{F}_3(\sigma_2;h,\eta).
\end{align}
Combining Lemmas \ref{expand:1} and \ref{expand:2}, we obtain
\begin{align*}
\Phi_{\bsigma}(\beta,\gamma;h,\eta)
&=-\frac{(\sigma_1-\sigma_0)^2E_4}{\gamma^4}+
\frac{(\sigma_1-\sigma_0)^2E_6}{\gamma^6}+O^*\bigg(\frac{(\sigma_1-\sigma_0)E_8}{\gamma^8}\bigg),
\end{align*}
where
\begin{align}\label{E_i:def}
E_4=E_4(\bsigma;\beta;h,\eta)
&:=
C_4(\sigma_0,\sigma_1;\beta;h,\eta)
-
D_4(\sigma_2;\beta;h,\eta),
\\
E_6=E_6(\bsigma;\beta;h,\eta)
&:=
C_6(\sigma_0,\sigma_1;\beta;h,\eta)
-
D_6(\sigma_2;\beta;h,\eta),
\nonumber\\
E_8=E_8(\bsigma;\beta;h,\eta)
&:=
C_8(\sigma_0,\sigma_1;\beta;h,\eta)
+
D_8(\sigma_2;\beta;h,\eta).
\nonumber
\end{align}

\subsection{Low-zero argument}\label{Argument:1}
First, one wants to prove that the high zeta zero contribution is non-positive.
That is, for a given threshold $T>0$, one wants
\begin{equation}
\label{Hoffstein:negative}
\sum_{\substack{\rho=\beta+i\gamma\\
\frac{1}{2}<\beta<1,\ \gamma\geq T}}
\Omega_{\bsigma}(\beta,\gamma;h,\eta)
+
\frac{1}{2}
\sum_{\substack{\rho=\frac{1}{2}+i\gamma\\ \gamma\geq T}}
\Omega_{\bsigma}\!\left(\frac{1}{2},\gamma;h,\eta\right)
\leq  0.
\end{equation}
By definition \eqref{Phi}, the above is implied by the pointwise inequality
\begin{equation*}
 \Omega_{\bsigma}(\beta,\gamma;h,\eta)
 \le 0\qquad\text{ for }\quad\frac{1}{2}\leq \beta<1,\ h\geq 0,\ 0\leq \eta<1,\ \gamma\geq T.
 \end{equation*}
 Second, one wants the total sum in \eqref{Identity:5} to be positive. In other words, one wants
\begin{equation}
\label{Hoffstein:positive}
(\sigma_1-\sigma_0) 
\mathcal{F}_1(\sigma_0;h,\eta)
 +
 \mathcal{F}_2(\sigma_0,\sigma_1;h,\eta)+\frac{1}{2}(\sigma_1-\sigma_0)^2\mathcal{F}_3(\sigma_2;h,\eta)
 >0.
\end{equation}

Thus, if both \eqref{Hoffstein:negative} and
\eqref{Hoffstein:positive} hold, then the total zero contribution in
\eqref{Identity:5} is positive, whereas the zeros with $\gamma\ge T$
contribute non-positively. Therefore some positive contribution must come
from zeros with $0\leq \gamma<T$. Hence $\zeta_K(s)$ has at least one
zero with $|\gamma|<T$, so that $h(K)<T$.

\subsection{Qualitative argument.}

Moreover, suppose that, for some constant $c_0>0$, one has
\begin{equation*}
(\sigma_1-\sigma_0)\mathcal{F}_1(\sigma_0;h,\eta)
+\mathcal{F}_2(\sigma_0,\sigma_1;h,\eta)
+\frac{1}{2}(\sigma_1-\sigma_0)^2
\mathcal{F}_3(\sigma_2;h,\eta)
\ge c_0 n_K-O(1).
\end{equation*}
Assume also that \eqref{Hoffstein:negative} holds. Recall the zero counting function $N_K(T)$ defined in \eqref{N_K}
where zeros are counted with multiplicity.

Since $\Omega_{\bsigma}(\beta,\gamma;h,\eta)$ is continuous on the compact
rectangle $1/2\leq \beta\leq 1$, $0\leq \gamma\leq  T$,
there exists a constant $c_T(\bsigma;h,\eta)>0$, depending only on
$T,\bsigma,h,\eta$, such that
\begin{equation*} 
\left|\Omega_{\bsigma}(\beta,\gamma;h,\eta)\right|
\leq  c_T(\bsigma;h,\eta),\qquad\text{ for } \quad 1/2\leq \beta\leq  1,\ 0\leq \gamma\leq  T.
\end{equation*}
Then the low-zero contribution in
\eqref{eq:Phi-sym-identity} satisfies
\begin{align*}
&\sum_{\substack{\rho=\beta+i\gamma\\
\frac{1}{2}<\beta<1,\ 0<\gamma<T}}\Omega_{\bsigma}(\beta,\gamma;h,\eta)
+\frac{1}{2}\sum_{\substack{\rho=\frac{1}{2}+i\gamma\\ 0<\gamma<T}}\Omega_{\bsigma}\!\left(\frac{1}{2},\gamma;h,\eta\right)\\
&\qquad\qquad\qquad+
\frac{1}{2}\sum_{\substack{\rho=\beta\\ \frac{1}{2}<\beta<1}}\Omega_{\bsigma}(\beta,0;h,\eta)
+\frac{1}{4}\sum_{\substack{\rho=\frac{1}{2}}}\Omega_{\bsigma}\!\left(\frac{1}{2},0;h,\eta\right)\leq  c_T(\bsigma;h,\eta)N_K(T).
\end{align*}
On the other hand, by \eqref{eq:Phi-sym-identity} and
\eqref{Hoffstein:negative}, we obtain
\begin{equation*}
c_0 n_K-O(1)\leq  c_T(\bsigma;h,\eta)N_K(T).
\end{equation*}
Therefore
\begin{equation*}
N_K(T)\geq\frac{c_0 n_K-O(1)}{c_T(\bsigma;h,\eta)}\gg_{\bsigma,h,\eta,T} n_K.
\end{equation*}
Thus the method gives not only the existence of one zero with
$0\leq \gamma<T$, but a positive proportion of order $n_K$ of such zeros.

\section{Optimisation problems}\label{rod}

Recall definitions \eqref{F1:def}, \eqref{F2:def} and \eqref{eq:F3def}. Write
\begin{align}\label{expression}
&(\sigma_1-\sigma_0) 
\mathcal{F}_1(\sigma_0;h,\eta)
 +\mathcal{F}_2(\sigma_0,\sigma_1;h,\eta)+\frac{1}{2}(\sigma_1-\sigma_0)^2\mathcal{F}_3(\sigma_2;h,\eta)\nonumber\\
&\qquad\qquad\qquad = r_1\Delta_{\mathbb{R}}(\bsigma;h,\eta)+r_2\Delta_{\mathbb{C}}(\bsigma;h,\eta)+\Delta_O(\bsigma;h,\eta)+P(\bsigma;h,\eta),
 \end{align} 
 where
\begin{align}\label{def:Delta_R}
 \Delta_{\mathbb{R}}(\bsigma;h,\eta)&=-\frac{1}{2}(\sigma_1-\sigma_0)\bigg[\psi\left(\frac{\sigma_0}{2}\right)-\eta\psi\left(\frac{\sigma_0+h}{2}\right)\bigg]+\log\bigg(\frac{\Gamma(\sigma_1/2)}{\Gamma(\sigma_0/2)}\bigg)\nonumber\\
&\qquad-\eta\log\bigg(\frac{\Gamma((\sigma_1+h)/2)}{\Gamma((\sigma_0+h)/2)}\bigg)
-\frac{1}{8}(\sigma_1-\sigma_0)^2\bigg[\psi'\left(\frac{\sigma_2}{2}\right)-\eta\psi'\left(\frac{\sigma_2+h}{2}\right)\bigg],\\
\label{def:Delta_C}
 \Delta_{\mathbb{C}}(\bsigma;h,\eta)&=-(\sigma_1-\sigma_0)\big[\psi(\sigma_0)-\eta\psi(\sigma_0+h)\big]
 +\log\bigg(\frac{\Gamma(\sigma_1)}
{\Gamma(\sigma_0)}\bigg)\nonumber\\
&\qquad-\eta\log\bigg(\frac{\Gamma(\sigma_1+h)}
{\Gamma(\sigma_0+h)}\bigg)
-\frac{1}{2}(\sigma_1-\sigma_0)^2\big[\psi'(\sigma_2)-\eta\psi'(\sigma_2+h)\big],\\
\label{def:Delta_O}
 \Delta_O(\bsigma;h,\eta)&=-(\sigma_1-\sigma_0)\bigg(\frac{1}{\sigma_0} +\frac{1}{\sigma_0-1}-\frac{\eta}{\sigma_0+h}
-\frac{\eta}{\sigma_0+h-1}\bigg)\nonumber\\
&\qquad+\frac{1}{2}(\sigma_1-\sigma_0)^2\bigg(
 \frac{1}{\sigma_2^2}+\frac{1}{(\sigma_2-1)^2}-\frac{\eta}{(\sigma_2+h)^2}-\frac{\eta}{(\sigma_2+h-1)^2}\bigg)\nonumber\\
 &\qquad+\log\bigg(\frac{\sigma_1(\sigma_1-1)}{\sigma_0(\sigma_0-1)}\bigg)-\eta\log\bigg(\frac{(\sigma_1+h)(\sigma_1+h-1)}{(\sigma_0+h)(\sigma_0+h-1)}\bigg),\\
\label{def:P}
P(\bsigma;h,\eta)
&=-(\sigma_1-\sigma_0)\bigg[\frac{\zeta_K'}{\zeta_K}(\sigma_0)-\eta\frac{\zeta_K'}{\zeta_K}(\sigma_0+h)\bigg]+\log\left(\frac{\zeta_K(\sigma_1)}{\zeta_K(\sigma_0)}\right)\nonumber\\
&-\eta\log\left(\frac{\zeta_K(\sigma_1+h)}{\zeta_K(\sigma_0+h)}\right)-\frac{1}{2}(\sigma_1-\sigma_0)^2\bigg[\bigg(\frac{\zeta_K'}{\zeta_K}\bigg)'(\sigma_2)-\eta\bigg(\frac{\zeta_K'}{\zeta_K}\bigg)'(\sigma_2+h)\bigg].
\end{align}
Note that $P(\bsigma;h,\eta)$ corresponds to the terms that involve the Dedekind zeta function $\zeta_K$. Moreover, since $\sigma_1>\sigma_0>1$, $\sigma_2>1$, by Lemma \ref{ideals}, the above expression corresponds to the following prime ideal contribution
\begin{equation*}
P(\bsigma;h,\eta)
=\sum_{\mathfrak p\in P_K^*}\Xi_{\bsigma,h,\eta}(N\mathfrak p),
\end{equation*} 
where
\begin{align}
\label{E:def}
 \Xi_{\bsigma,h,\eta}:u\geq 2\mapsto&(\sigma_1-\sigma_0)\bigg[\frac{\log u}{u^{\sigma_0}-1}-\frac{\eta\log u}{u^{\sigma_0+h}-1}\bigg]+\log\left( \frac{1-u^{-\sigma_0}}{1-u^{-\sigma_1}}\right)\\
 & -\eta\log \left(\frac{1-u^{-(\sigma_0+h)}}{1-u^{-(\sigma_1+h)}}\right)- \frac{(\sigma_1-\sigma_0)^2}{2} \bigg[\frac{u^{\sigma_2}\log^2 u}{(u^{\sigma_2}-1)^2}- \frac{\eta u^{\sigma_2+h}\log^2 u}{(u^{\sigma_2+h}-1)^2}\bigg].\nonumber
\end{align}
Suppose that for given parameters $(\bsigma,h,\eta)$ the function \eqref{E:def} is non-negative for every $u\ge2$. Then the whole function $P(\bsigma;h,\eta)$ may be discarded by positivity when aiming for a lower bound for
\eqref{Hoffstein:positive}. After this discard, the remaining contribution to \eqref{expression} is linear
in $\{r_1,r_2\}$, while the contribution $\Delta_O(\bsigma;h,\eta)$  is bounded independently of $n$. If we succeed in proving that $\Delta_{\mathbb{R}}(\bsigma;h,\eta)>0$ and $\Delta_{\mathbb{C}}(\bsigma;h,\eta)>0$, the right-hand side
of expression \eqref{expression} is positive once $n_K=r_1+2r_2$ is sufficiently large.

With the help of the above observations, we have the following optimisation problem.

\begin{problem}\label{Optimisation:A}
\normalfont
Find parameters $(\bsigma,h,\eta,T)$ such that 
\begin{enumerate}[
    label=\textbf{(\roman*)},
    ref=\textbf{(\roman*)},
    leftmargin=3em,
    itemsep=6pt,
    topsep=6pt,
    parsep=0pt
]

\item\label{OptA:range}\qquad
$\min\{\sigma_0,\sigma_2\}>1,\quad \sigma_1>\sigma_0,\quad 0\leq \eta<1,\quad h\geq 0,\quad T>0,$

\item\label{OptA:Omega}\qquad
$\Omega_{\bsigma}(\beta,\gamma;h,\eta)\leq  0
\quad\text{ for }\quad 1/2\leq \beta<1,\ \gamma\geq T,$

\item\label{OptA:Xi}\qquad
$\Xi_{\bsigma,h,\eta}(u)\geq 0
\quad\text{ for }\quad u\geq 2,$

\item\label{OptA:Delta}\qquad
$\Delta_{\mathbb R}(\bsigma;h,\eta)>0,\quad
\Delta_{\mathbb C}(\bsigma;h,\eta)>0.$\\

\end{enumerate}
\end{problem}

Note that the symmetrised weight pointwise inequality is weaker than the non symmetrised pointwise version, thus allowing a broader range of quadruples $(\bsigma,T)$.

\subsection{Negativity of the weight function - Condition \ref{Optimisation:A}\ref{OptA:Omega}} 
Recall definitions \eqref{Omega} and \eqref{E_i:def}. We have
\begin{align*}
\Omega_{\bsigma}(\beta,\gamma;h,\eta)
&= \Phi_{\bsigma}(\beta,\gamma;h,\eta) +\Phi_{\bsigma}(1-\beta,\gamma;h,\eta)\\
&=-\frac{(\sigma_1-\sigma_0)^2J_4}{\gamma^4}+\frac{(\sigma_1-\sigma_0)^2J_6}{\gamma^6}+O^*\bigg(\frac{(\sigma_1-\sigma_0)J_8}{\gamma^8}\bigg)
\end{align*}
where
\begin{align*}
J_4=J_4(\bsigma;\beta;h,\eta)&=E_4(\bsigma;\beta;h,\eta)+E_4(\bsigma;1-\beta;h,\eta),\nonumber\\
J_6=J_6(\bsigma;\beta;h,\eta)&=E_6(\bsigma;\beta;h,\eta)+E_6(\bsigma;1-\beta;h,\eta),\\
J_8=J_8(\bsigma;\beta;h,\eta)&=E_8(\bsigma;\beta;h,\eta)+E_8(\bsigma;1-\beta;h,\eta).\nonumber
\end{align*}
Condition \ref{Optimisation:A}\ref{OptA:Omega} will be eventually achieved if $J_4>0$ and $\gamma\geq T\to\infty$.

Note that $J_4$ simplifies exactly into a quantity independent from $\beta$. Namely,
\begin{align*}
J_4(\bsigma;h,\eta)
=\ &(1-\eta)\left(3\sigma_0^2+2\sigma_0\sigma_1-4\sigma_0+\sigma_1^2-2\sigma_1-6\sigma_2^2+6\sigma_2\right)-4\eta h
\left(2\sigma_0+\sigma_1-3\sigma_2\right).
\end{align*}
Furthermore, since $J_8\geq 0$, in order to prove Condition \ref{Optimisation:A}\ref{OptA:Omega}, it is sufficient that
\begin{equation}\label{condition1}
\max\bigg\{\frac{(\sigma_1-\sigma_0)^2J_6}{\gamma^6}+\frac{(\sigma_1-\sigma_0)J_8}{\gamma^8}\ ,\ 0\bigg\}\leq \frac{(\sigma_1-\sigma_0)^2J_4}{\gamma^4},\qquad J_4>0,
\end{equation}
uniformly in $1/2\leq \beta<1$ and $\gamma\geq T$. 
Now, for $\gamma\geq T_0$, we have $0<1/\gamma^2\leq  1/T_0^2$. Hence
\begin{equation*}
\Omega_{\bsigma}(\beta,\gamma;h,\eta)
\leq \frac{(\sigma_1-\sigma_0)^2}{\gamma^4}\left(-J_4+\frac{J_6}{\gamma^2}+\frac{J_8}{(\sigma_1-\sigma_0)\gamma^4}\right).
\end{equation*}
Since $J_8\geq0$, the function
\begin{equation*}
u\mapsto J_6u+\frac{J_8}{\sigma_1-\sigma_0}u^2
\end{equation*}
is convex on $[0,1/T_0^2]$. Therefore
\begin{equation}\label{convex}
J_6u+\frac{J_8}{\sigma_1-\sigma_0}u^2
\leq 
\max\left\{0,\,\frac{J_6}{T_0^2}+\frac{J_8}{(\sigma_1-\sigma_0) T_0^4}\right\}.
\end{equation}
We are going to verify \eqref{condition1} analytically for $\gamma\geq T_0$, for a threshold $T_0>T$ and numerically for $\gamma\in[T,T_0]$. Notice that for $\gamma\geq T_0$, thanks to \eqref{convex}, condition \eqref{condition1} is satisfied if
\begin{equation*}
\max\bigg\{\frac{J^*_6}{T_0^2}+\frac{J^*_8}{(\sigma_1-\sigma_0)T_0^4}\ ,\ 0\bigg\}\leq  J_4,
\end{equation*}
where, by using the inequalities $1/2\leq \beta<1<\min\{\sigma_0,\sigma_2\}$, $\sigma_0<\sigma_1$, we have
\begin{equation*}
J_6(\bsigma;\beta;h,\eta)\leq  J_6^*(\bsigma;h,\eta),\qquad J_8(\bsigma;\beta;h,\eta)\leq  J_8^*(\bsigma;h,\eta),
\end{equation*}
where $J_6^*(\bsigma;h,\eta)$ is defined as
\begin{align*}
&
5(\sigma_0-\tfrac{1}{2})^4+\tfrac{20}{3}(\sigma_0-\tfrac{1}{2})^3(\sigma_1-\sigma_0)
+5(\sigma_0-\tfrac{1}{2})^2(\sigma_1-\sigma_0)^2
+2(\sigma_0-\tfrac{1}{2})(\sigma_1-\sigma_0)^3
\\
&+\tfrac{2}{3}(\sigma_1-\sigma_0)^4
-5(\sigma_2-1)^4+5\sigma_0^4
+\tfrac{20}{3}\sigma_0^3(\sigma_1-\sigma_0)
+5\sigma_0^2(\sigma_1-\sigma_0)^2
+2\sigma_0(\sigma_1-\sigma_0)^3
\\
&-5(\sigma_2-\tfrac{1}{2})^4-\tfrac{2}{3}\eta(\sigma_1-\sigma_0)^4
+5\eta(\sigma_2+h-\tfrac{1}{2})^4
+5\eta(\sigma_2+h)^4-\eta\bigg[
5(\sigma_0+h-1)^4\\
&
\qquad+\tfrac{20}{3}(\sigma_0+h-1)^3(\sigma_1-\sigma_0)
+5(\sigma_0+h-1)^2(\sigma_1-\sigma_0)^2+2(\sigma_0+h-1)(\sigma_1-\sigma_0)^3\bigg]
\\
&
-\eta\bigg[5(\sigma_0+h-\tfrac{1}{2})^4
+\tfrac{20}{3}(\sigma_0+h-\tfrac{1}{2})^3(\sigma_1-\sigma_0)
+5(\sigma_0+h-\tfrac{1}{2})^2(\sigma_1-\sigma_0)^2
\\
&\qquad
+2(\sigma_0+h-\tfrac{1}{2})(\sigma_1-\sigma_0)^3\bigg],
\end{align*}
and $J_8^*(\bsigma;h,\eta)$ is defined as
\begin{align*}
&
(\sigma_0+\sigma_1-1)
\bigg[(\sigma_0-\tfrac{1}{2})^6+\tfrac{3}{2}(\sigma_1-\sigma_0)(\sigma_0+\sigma_1-1)(\sigma_0-\tfrac{1}{2})^4
\\
&\qquad\qquad\qquad\qquad
+(\sigma_1-\sigma_0)^2(\sigma_0+\sigma_1-1)^2(\sigma_0-\tfrac{1}{2})^2
+\tfrac{1}{4}(\sigma_1-\sigma_0)^3(\sigma_0+\sigma_1-1)^3\bigg]
\\
&
+(\sigma_0+\sigma_1)\bigg[\sigma_0^6+\tfrac{3}{2}(\sigma_1-\sigma_0)(\sigma_0+\sigma_1)\sigma_0^4
+(\sigma_1-\sigma_0)^2(\sigma_0+\sigma_1)^2\sigma_0^2+\tfrac{1}{4}(\sigma_1-\sigma_0)^3(\sigma_0+\sigma_1)^3\bigg]
\\
&
+2(\sigma_0-\tfrac{1}{2})^7+2\sigma_0^7+7(\sigma_1-\sigma_0)(\sigma_2-\tfrac{1}{2})^6
+7(\sigma_1-\sigma_0)\sigma_2^6
\\
&
+\eta(\sigma_0+\sigma_1+2h-1)
\bigg[(\sigma_0+h-\tfrac{1}{2})^6+\tfrac{3}{2}(\sigma_1-\sigma_0)(\sigma_0+\sigma_1+2h-1)(\sigma_0+h-\tfrac{1}{2})^4
\\
&\qquad\qquad\quad+(\sigma_1-\sigma_0)^2(\sigma_0+\sigma_1+2h-1)^2(\sigma_0+h-\tfrac{1}{2})^2+\tfrac{1}{4}(\sigma_1-\sigma_0)^3(\sigma_0+\sigma_1+2h-1)^3\bigg]
\\
&
+\eta(\sigma_0+\sigma_1+2h)\bigg[(\sigma_0+h)^6+\tfrac{3}{2}(\sigma_1-\sigma_0)(\sigma_0+\sigma_1+2h)(\sigma_0+h)^4
\\
&\qquad\qquad\quad+(\sigma_1-\sigma_0)^2(\sigma_0+\sigma_1+2h)^2(\sigma_0+h)^2+\tfrac{1}{4}(\sigma_1-\sigma_0)^3(\sigma_0+\sigma_1+2h)^3\bigg]
\\
&
+2\eta(\sigma_0+h-\tfrac{1}{2})^7
+2\eta(\sigma_0+h)^7
+7\eta(\sigma_1-\sigma_0)(\sigma_2+h-\tfrac{1}{2})^6
+7\eta(\sigma_1-\sigma_0)(\sigma_2+h)^6.\\
\end{align*}
We have the next simplified optimisation problem which implies Problem \ref{Optimisation:A}. 

\begin{problem}\label{Optimisation:B}
\normalfont
Find parameters $(\bsigma,h,\eta,T,T_0)$ such that 
\begin{enumerate}[
    label=\textbf{(\roman*)},
    ref=\textbf{(\roman*)},
    leftmargin=3em,
    itemsep=6pt,
    topsep=6pt,
    parsep=0pt
]

\item\label{OptB:range}\qquad
$\min\{\sigma_0,\sigma_2\}>1,\quad \sigma_1>\sigma_0,\quad 0\leq  \eta<1,\quad h\geq 0,\quad T_0>T>0,$

\item\label{OptB:threshold}\qquad
$\max\left\{0,\frac{J^*_6(\bsigma;h,\eta)}{T_0^2}+\frac{J^*_8(\bsigma;h,\eta)}{(\sigma_1-\sigma_0)T_0^4}\right\}\leq  J_4(\bsigma;h,\eta),$

\item\label{OptB:Omega}\qquad
$\Omega_{\bsigma}(\beta,\gamma;h,\eta)\leq  0\qquad
 \text{ for }\quad  1/2\leq \beta\leq  1,\ T\leq \gamma\leq  T_0,$
 
\item\label{OptB:Xi}\qquad
$\Xi_{\bsigma,h,\eta}(u)\geq 0
\quad\text{ for }\quad u\geq 2,$

\item\label{OptB:Delta}\qquad
$\Delta_{\mathbb R}(\bsigma;h,\eta)>0,\quad
\Delta_{\mathbb C}(\bsigma;h,\eta)>0.$\\

\end{enumerate}
\end{problem}

\subsection{Positivity of the prime ideal contribution - Condition \ref{Optimisation:B}\ref{OptB:Xi}}
Recall definition \eqref{E:def}. For a given triple $\bsigma$, we want to verify the simpler sufficient (but not necessary) property $\Xi_{\bsigma,h,\eta}(u)\ge 0$, $u\geq 2$.
 This way, we will be able to discard the prime ideal contribution to \eqref{expression}. Writing $u=e^t$, we have
\begin{align*}
\Xi^*_{\bsigma,h,\eta}=\Xi_{\bsigma,h,\eta}(e^t)&:t\geq \log 2\mapsto
 (\sigma_1-\sigma_0)\bigg[\frac{t}{e^{\sigma_0t}-1}-\frac{\eta t}{e^{(\sigma_0+h)t}-1}\bigg]+\log \left(\frac{1-e^{-\sigma_0 t}}{1-e^{-\sigma_1 t}}\right)\\&
     -\eta\log\left(\frac{1-e^{-(\sigma_0+h)t}}{1-e^{-(\sigma_1+h)t}}\right)- \frac{(\sigma_1-\sigma_0)^2}{2}\bigg[
        \frac{t^2 e^{\sigma_2t}}{(e^{\sigma_2 t}-1)^2}-\frac{\eta t^2e^{(\sigma_2+h)t}}{(e^{(\sigma_2+h)t}-1)^2}\bigg].\nonumber
\end{align*}
For a constant $a>0$, by expanding into power series,
\begin{equation*}
\frac{1}{e^{at}-1}=\sum_{m\ge1}e^{-amt},\qquad
\log(1-e^{-at})=-\sum_{m\ge1}\frac{e^{-amt}}{m},
\qquad\frac{e^{at}}{(e^{at}-1)^2}=
 \sum_{m\ge1}m e^{-amt},
\end{equation*}
we obtain
\begin{equation}\label{Stechkin}
\Xi^*_{\bsigma,h,\eta}(t) =\sum_{m\ge1}\frac{e^{-\sigma_0mt}(1-\eta e^{-hmt})}{m} W_\theta((\sigma_1-\sigma_0)mt),
\end{equation}
where
\begin{equation*}
W_\theta(x) := x-1+e^{-x} -\frac{1}{2}x^2e^{-\theta x}\quad\text{ and }\quad
\theta=\theta(\bsigma)=\frac{\sigma_2-\sigma_0}{\sigma_1-\sigma_0}.
\end{equation*}
Observe that for any $\theta$, $W_\theta(0)=0$.
\begin{lemma}\label{calculus}\normalfont
Let $\theta>0$. 
\begin{enumerate}[
    label=\textbf{(\alph*)},
    ref=\textbf{(\alph*)},
    leftmargin=3em,
    itemsep=6pt,
    topsep=6pt,
    parsep=0pt
]
\item\label{>=1/3} If $\theta\ge 1/3$, then $W_\theta(x)>0\qquad (x>0)$.
\item\label{<1/3} If $0<\theta<1/3$, then $W_\theta$ has exactly one positive
zero $x_\theta$. Furthermore, $W_\theta(x)<0$, $0<x<x_\theta$, and $W_\theta(x)>0$, $x>x_\theta$.
\end{enumerate}
\end{lemma}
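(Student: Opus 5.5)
The plan is to turn the sign question for $W_\theta$ into a comparison between $\theta$ and a single auxiliary function of $x$ that does not involve $\theta$. The first step is the integral representation, valid for $x>0$,
\begin{equation*}
x-1+e^{-x}=x^2\int_0^1(1-t)e^{-xt}\,dt,
\end{equation*}
which follows from Taylor's formula with integral remainder applied to $e^{-x}$. Setting
\begin{equation*}
L(x):=2\int_0^1(1-t)e^{-xt}\,dt,
\end{equation*}
we get $L(x)>0$ and $W_\theta(x)=\frac{x^2}{2}\bigl(L(x)-e^{-\theta x}\bigr)$. Hence, for $x>0$, the sign of $W_\theta(x)$ equals the sign of $\log L(x)+\theta x$. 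Define $g(x):=-\log L(x)$ and $\varphi(x):=g(x)/x$. Then $W_\theta(x)>0$ if and only if $\theta>\varphi(x)$, and $W_\theta(x)=0$ if and only if $\theta=\varphi(x)$.

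The key step is to show that $\varphi$ is strictly decreasing on $(0,\infty)$, with $\varphi(0^+)=1/3$ and $\varphi(\infty)=0$.

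\emph{Concavity of $g$.} The function $L$ is the Laplace transform of the positive measure $2(1-t)\,dt$ on $[0,1]$. By the Cauchy--Schwarz (or H\"older) inequality, $\log L$ is convex, so $L L''\ge (L')^2$. Equality would force $t$ to be constant on the support of the measure, which is impossible, so the inequality is strict and $g$ is strictly concave.

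\emph{Chord slopes.} Since $L(0)=1$, we have $g(0)=0$. The quantity $\varphi(x)=(g(x)-g(0))/(x-0)$ is the slope of the chord of $g$ from the origin. For a strictly concave $g$ this slope is strictly decreasing in $x$.

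\emph{Endpoint values.} As $x\to0^+$, $\varphi(x)\to g'(0)=-L'(0)/L(0)=2\int_0^1 t(1-t)\,dt=1/3$. As $x\to\infty$, $L(x)\sim 2/x$, so $g(x)\sim\log(x/2)$ and $\varphi(x)\to 0$. In particular $0<\varphi(x)<1/3$ for every $x>0$.

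The two cases now follow at once. For \ref{>=1/3}: if $\theta\ge1/3$, then $\theta>\varphi(x)$ for every $x>0$, so $W_\theta(x)>0$. For \ref{<1/3}: if $0<\theta<1/3$, then $\varphi$ is continuous and strictly decreasing from $1/3$ to $0$, so there is exactly one $x_\theta>0$ with $\varphi(x_\theta)=\theta$. For $x<x_\theta$ we have $\varphi(x)>\theta$, so $W_\theta(x)<0$; for $x>x_\theta$ we have $\varphi(x)<\theta$, so $W_\theta(x)>0$.

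The main obstacle is proving the monotonicity of $\varphi$. A direct computation of $W_\theta'$ and $W_\theta''$ gets messy, which is why I route everything through the log-convexity of the Laplace transform $L$. As a sanity check, this is consistent with the local expansion $W_\theta(x)=\bigl(\tfrac{\theta}{2}-\tfrac16\bigr)x^3+O(x^4)$ near $0$.
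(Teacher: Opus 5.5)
Your proof is correct. It follows the paper's skeleton: the same factorisation $W_\theta(x)=\frac{x^2}{2}\bigl(I(x)-e^{-\theta x}\bigr)$, where your $L$ is exactly the paper's $I(x)=2(x-1+e^{-x})/x^2$. You also use the same auxiliary function $\varphi=\Theta=-\log I(x)/x$ and the same endpoint limits $1/3$ and $0$.

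The genuine difference is how you obtain the key fact, strict concavity of $g=-\log I$. The paper differentiates twice explicitly. It then asserts, ``by calculus'', that $x^2-4x+2+(x^3+x^2+4x-4)e^{-x}+2e^{-2x}>0$, without giving the argument. You instead recognise $I$ as the Laplace transform of the positive measure $2(1-t)\,dt$ on $[0,1]$. Strict log-convexity is then just the strict case of Cauchy--Schwarz, and $g'(0)=1/3$ appears as the mean of $t$ under that measure. This removes the only unverified inequality in the argument.

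Your chord-slope step is also the correct bridge from concavity to monotonicity of $\Theta$. At that point the paper writes $(-\log I(0))'=0$ and concludes that $-\log I$ is strictly decreasing. This is a slip: in fact $(-\log I)'(0)=1/3$, and $-\log I$ increases from $0$, asymptotically like $\log(x/2)$. Moreover, ``$-\log I$ decreasing, hence $\Theta$ decreasing'' would not be a valid inference in any case. What is needed is precisely that $g(0)=0$ and $g$ is concave, i.e.\ your observation. Equivalently, $xg'-g$ vanishes at $0$ and has derivative $xg''<0$, so $\Theta'=(xg'-g)/x^2<0$.

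So your route is self-contained and repairs the published write-up. The paper's explicit formula for $(-\log I)''$ offers only a concrete expression that could be checked numerically.
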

\begin{proof}
For $x>0$, write $W_\theta(x)= \frac{x^2}{2}\left(I(x)-e^{-\theta x}\right)$
where
\begin{equation}\label{def:I}
 I(x):=\frac{2(x-1+e^{-x})}{x^2}.
\end{equation}
Since $e^t>1+t$ for all $t\in\mathbb{R}$, $t\neq 0$, we have $I(x)>0$, $x\in\mathbb{R}$, $x\neq 0$. Moreover, $I$ extends continuously to $x=0$ by defining $I(0)=\lim_{x\to0}I(x)=1$. Thus $I(x)>0$ for all  $x\in\mathbb{R}$. We may now define the infinitely differentiable function
\begin{equation*}
        \Theta:x\geq 0\mapsto
        -\frac{\log I(x)}{x}.
\end{equation*}
Then $I(x)=e^{-x\Theta(x)}$ and
\begin{equation*}
W_\theta(x) =\frac{x^2}{2}\left( e^{-x\Theta(x)}-e^{-\theta x}\right).
\end{equation*}
Hence $W_\theta(x)>0$ if and only if $\Theta(x)<\theta$
and $W_\theta(x)=0$ if and only if $ \Theta(x)=\theta$.
       
We next study the monotonicity of $\Theta$. By definition \eqref{def:I},
we have
\begin{align*}
 (-\log I(x))'&=\phantom{-}\frac{x-2+(x+2)e^{-x}}{x(x-1+e^{-x})},\\
(-\log I(x))''&= -\frac{x^2-4x+2+(x^3+x^2+4x-4)e^{-x}+ 2e^{-2x} } {x^2(x-1+e^{-x})^2 }.
\end{align*}
By calculus, we can prove that $x^2-4x+2+(x^3+x^2+4x-4)e^{-x}+ 2e^{-2x}>0$ for all $x\geq 0$. Thus $(-\log I(x))''<0$ and hence $(-\log I(x))'$ is strictly decreasing for $x\geq 0$. Moreover, since $(-\log I(0))'=0$, then $(-\log I(x))'<0$ for all $x>0$. Thus, the function $-\log I(x)$ is strictly decreasing for $x>0$. Hence $\Theta(x)$ is strictly decreasing for $x>0$.

Furthermore, we readily see that $
\lim_{x\to0^+}\Theta(x)= \frac{1}{3}.$
Hence, $\Theta$ maps $(0,\infty)$ strictly decreasingly onto
$(0,1/3)$ since
\begin{equation*}
 I(x) =\frac{2(x-1+e^{-x})}{x^2} \sim\frac{2}{x}\ \text{ and }\ \Theta(x)=-\frac{\log I(x)}{x} =
 \frac{\log x-\log2+o(1)}{x}\to 0.
 %\quad(x\to\infty).
\end{equation*}
Subsequently,  if $\theta\ge1/3$, then $\Theta(x)< 1/3\leq  \theta$ for all $x>0$, and hence
$W_\theta(x)>0$ for all $x>0$, whence \ref{>=1/3}. Else, if $0<\theta<1/3$, then there exists
a unique $x_\theta>0$ such that $\Theta(x_\theta)=\theta$. Since $\Theta$ is strictly decreasing, $x_\theta$ is the
unique positive zero of $W_\theta$. Thereupon,
we have $\Theta(x)>\theta$ if $0<x<x_\theta$, and
$ \Theta(x)<\theta$ if $x>x_\theta$; consequently
$W_\theta(x)<0$ if $0<x<x_\theta$, and
$ W_\theta(x)>0$ if $x>x_\theta$. This gives \ref{<1/3}.
\end{proof}

For a given $\theta=\frac{\sigma_2-\sigma_0}{\sigma_1-\sigma_0}$, by Lemma \ref{calculus}\ref{>=1/3}, if $\theta\geq1/3$, then $W_\theta(x)\geq0$ for every $x\geq0$.
Since $1-\eta e^{-hmt}>0$,
every term in \eqref{Stechkin} is non-negative. Hence
\begin{equation}\label{positive:1}
\Xi^*_{\bsigma,h,\eta}(t)\geq 0,\qquad\text{ for }t\geq\log 2.
\end{equation}
Suppose now $0<\theta<1/3$. Let
$0<x_\theta^{\mathrm l}<x_\theta^{\mathrm u}$ be such that $x_\theta\in(x_\theta^{\mathrm l},x_\theta^{\mathrm u})$.
By Lemma \ref{calculus}\ref{<1/3}, $W_\theta(x)\geq0$ for $x\geq x_\theta$. Therefore, if
\begin{equation*}
t\geq \frac{x_\theta^{\mathrm u}}{\sigma_1-\sigma_0},
\end{equation*}
then $(\sigma_1-\sigma_0)mt\geq (\sigma_1-\sigma_0)t\geq x_\theta$
and every term in \eqref{Stechkin} is non-negative. Hence, we also have
\begin{equation}\label{positive:2}
 \Xi^*_{\bsigma,h,\eta}(t)\geq 0\qquad\bigg( t\geq\frac{x_\theta^{\mathrm u}}{\sigma_1-\sigma_0}\bigg).
\end{equation}
Thus \eqref{positive:1} and \eqref{positive:2} present ranges of $t$, and hence of $u=e^t$, in which we have $\Xi_{\bsigma,h,\eta}(u)\ge 0$. It remains to treat the range
\begin{equation*}
 \log2\leq  t\leq 
 \frac{x_\theta^{\mathrm u}}{\sigma_1-\sigma_0}.
\end{equation*}
in the case $0<\theta<1/3$. Let
\begin{equation*}
v=\left\lceil\frac{x_\theta^{\mathrm u}}{(\sigma_1-\sigma_0)\log2}\right\rceil.
\end{equation*}
Then $v(\sigma_1-\sigma_0)t\geq v(\sigma_1-\sigma_0)\log2 >x_\theta$ and thanks to Lemma \ref{calculus}\ref{<1/3}, all terms with $m\geq v$ in the sum defining \eqref{Stechkin} are non-negative. Hence, for any threshold
$K\in\mathbb N$,
\begin{equation}\label{S_Kv:}
\Xi^*_{\bsigma,h,\eta}(t)\geq
 S_{Kv}(t):=\sum_{m=1}^{Kv}\frac{e^{-\sigma_0mt}(1-\eta e^{-hmt})}{m} W_\theta((\sigma_1-\sigma_0)mt),
\end{equation}
and our goal is to be able to prove that 
\begin{equation}\label{S_Kv}
S_{Kv}(t)\geq 0\qquad\text{ for }\quad \log 2\leq  t\leq  \frac{x_\theta^{\text{u}}}{\sigma_1-\sigma_0}
\end{equation} 
for a given $K\in\mathbb{N}$. The number $K$ is chosen by simplicity as small as possible, to ensure that at least $(K-1)v+1$ summands (the ones with $m\geq v$) contribute positively to $S_{Kv}(t)$, increasing the chances of accomplishing task \eqref{S_Kv}. 

We finally arrive at the following two optimisation problems. Both imply Problem \ref{Optimisation:B} and thus Problem \ref{Optimisation:A}.

\begin{problem}\label{Optimisation:C}
\normalfont
Find parameters $(\bsigma,h,\eta,T,T_0)$ such that 
\begin{enumerate}[
    label=\textbf{(\roman*)},
    ref=\textbf{(\roman*)},
    leftmargin=3em,
    itemsep=6pt,
    topsep=6pt,
    parsep=0pt
]

\item\label{OptC:range}\qquad
$\min\{\sigma_0,\sigma_2\}>1,\quad \sigma_1>\sigma_0,\quad 0\leq \eta<1,\quad h\geq 0,\quad T_0>T>0,$

\item\label{OptC:threshold}\qquad
$\max\left\{0,\frac{J^*_6(\bsigma;h,\eta)}{T_0^2}+\frac{J^*_8(\bsigma;h,\eta)}{(\sigma_1-\sigma_0)T_0^4}\right\}\leq  J_4(\bsigma;h,\eta),$

\item\label{OptC:Omega}\qquad
$\Omega_{\bsigma}(\beta,\gamma;h,\eta)\leq  0\qquad
 \text{ for }\quad \frac{1}{2}\leq \beta\leq  1,\ T\leq \gamma\leq  T_0,$
 
\item\label{OptC:Theta}\qquad
$\theta(\bsigma)=\frac{\sigma_2-\sigma_0}{\sigma_1-\sigma_0}\geq\frac{1}{3},$

\item\label{OptC:Delta}\qquad
$\Delta_{\mathbb R}(\bsigma;h,\eta)>0,\quad
\Delta_{\mathbb C}(\bsigma;h,\eta)>0.$\\

\end{enumerate}
\end{problem}

\begin{problem}\label{Optimisation:D}
\normalfont
Find parameters $(\bsigma,h,\eta,T,T_0,K)$ such that 
\begin{enumerate}[
    label=\textbf{(\roman*)},
    ref=\textbf{(\roman*)},
    leftmargin=3em,
    itemsep=6pt,
    topsep=6pt,
    parsep=0pt
]

\item\label{OptD:range}\qquad
$\min\{\sigma_0,\sigma_2\}>1,\quad \sigma_1>\sigma_0,\quad 0\leq \eta<1,\quad h\geq 0,\quad T_0>T>0,\quad K\in\mathbb{N},$

\item\label{OptD:threshold}\qquad
$\max\left\{0,\frac{J^*_6(\bsigma;h,\eta)}{T_0^2}+\frac{J^*_8(\bsigma;h,\eta)}{(\sigma_1-\sigma_0)T_0^4}\right\}\leq  J_4(\bsigma;h,\eta),$

\item\label{OptD:Omega}\qquad
$\Omega_{\bsigma}(\beta,\gamma;h,\eta)\leq  0\qquad
 \text{ for }\quad \frac{1}{2}\leq \beta\leq  1,\ T\leq \gamma\leq  T_0,$
 
\item\label{OptD:Theta}\qquad
$0<\theta(\bsigma)=\frac{\sigma_2-\sigma_0}{\sigma_1-\sigma_0}<\frac{1}{3},$

\item\label{OptD:S1}\qquad
$S_{Kv}(t)\geq 0,\qquad\text{ for }\quad \log2\le t\le\frac{x_\theta^{\text{u}}}{\sigma_1-\sigma_0},$

\item\label{OptD:Delta}\qquad
$\Delta_{\mathbb R}(\bsigma;h,\eta)>0,\quad
\Delta_{\mathbb C}(\bsigma;h,\eta)>0.$

\end{enumerate}
\end{problem}

\section{Outputs}\label{outputs}

\subsection{Recovering Hoffstein's result}

Hoffstein solves Problem \ref{Optimisation:D} with $T=0.87$. More precisely, Hoffstein's parameters are defined as
\begin{equation}\label{Hoffstein:parameters}
 (\bsigma,h,\eta,T,T_0,K)=\big((1.01,1.2,1.071),0,0,0.87,16,4\big).
\end{equation}
Note that when $\eta=0$, the value of $h$ is in fact irrelevant. Condition \ref{Optimisation:D}\ref{OptD:Theta} holds since
\begin{equation*}
\theta(\bsigma)=\frac{\sigma_2-\sigma_0}{\sigma_1-\sigma_0}= \frac{1.071-1.01}{1.2-1.01}=\frac{61}{190}=0.321052\ldots<\frac{1}{3}.
\end{equation*}
Using rigorous interval arithmetic, we find
\begin{equation*}
W_\theta(0.4514)=5.5470165302028059961\ldots\cdot10^{-9}>0.
\end{equation*}
It follows from the sign properties of $W_\theta$ that $x_\theta<0.4514$ and we may therefore take $x_\theta^{\mathrm u}=0.4514$.
Moreover, observe that
\begin{align*}
&\qquad J_4(\bsigma;0,0)=0.028054\ldots>0,\\
 J_6^*(\bsigma;0,0)&=6.73550817319\ldots,\quad
 J_8^*(\bsigma;0,0)=8.51817835421\ldots.
\end{align*}
Taking $T_0=16$, we obtain
\begin{equation*}
        \max\left\{
   0,\,\frac{J_6^*(\bsigma;0,0)}{T_0^2}+\frac{J_8^*(\bsigma;0,0)}
{(\sigma_1-\sigma_0)T_0^4}\right\}=
 0.02699466873\ldots<
   J_4(\bsigma;0,0) =0.028054\ldots.
\end{equation*}
Hence condition \ref{Optimisation:D}\ref{OptD:threshold} holds. By \eqref{condition1}, this implies
\begin{equation*}
   \Omega_{\bsigma}(\beta,\gamma;0,0)\leq 0 \qquad\text{ for }\quad 1/2\leq \beta\leq  1,\ \gamma\geq 16.
\end{equation*}
On the other hand, we have
\begin{equation*}
   v =\left\lceil\frac{x_\theta^{\mathrm u}}
      {(\sigma_1-\sigma_0)\log2}\right\rceil=\left\lceil \frac{0.4514}
{(1.2-1.01)\log2}\right\rceil=4.
\end{equation*}
Consequently, since $K=4$, according to definition \eqref{S_Kv:} the finite sum is 
\begin{equation*}
  S_{16}(t) =\sum_{m=1}^{16}
\frac{e^{-\sigma_0mt}}{m}
 W_\theta((\sigma_1-\sigma_0)mt).
\end{equation*}
Here the factor $1-\eta e^{-hmt}$ is equal to $1$, since
$\eta=0$. We next verify conditions \ref{Optimisation:D}\ref{OptD:Omega} and \ref{Optimisation:D}\ref{OptD:S1}, that is
\begin{align*}
\Omega_{\bsigma}(\beta,\gamma;0,0)
&\leq  0\qquad\text{ for }\quad 1/2\leq \beta\leq  1,\quad 0.87\leq \gamma\leq  16,\\
 S_{16}(t)&\geq 0,\qquad\text{ for }\quad \log2\le t\le\frac{x_\theta^{\text{u}}}{\sigma_1-\sigma_0}=\frac{0.4514}{1.2-1.01}= 2.375789\ldots.
\end{align*}
Both conditions involve a compact verification and this is checked by interval arithmetic. 

Finally, rigorous interval arithmetic gives
\begin{equation*}
\Delta_{\mathbb R}(\bsigma;0,0)
=\left[8.7812\cdot10^{-6}\pm 4.36\cdot10^{-11}\right],\quad
\Delta_{\mathbb C}(\bsigma;0,0)
=\left[3.589\cdot10^{-6}\pm 4.84\cdot10^{-10}\right].
\end{equation*}
In particular, $\Delta_{\mathbb R}(\bsigma;0,0)>0$, $\Delta_{\mathbb C}(\bsigma;0,0)>0$,
which verifies condition \ref{Optimisation:D}\ref{OptD:Delta}.

%%%%%%%
\subsection{A direct improvement on Hoffstein's result}
With the same Hoffstein's parameters \eqref{Hoffstein:parameters}, we can actually decrease the threshold from $0.87$ to $0.825$. Thus the choice
\begin{equation*}
(\bsigma,h,\eta,T,T_0,K)=\big((1.01,1.2,1.071),0,0,0.825,16,4\big)
\end{equation*}
also solves Problem \ref{Optimisation:D}, and we have
\begin{equation*}
h(K)\leq  0.825+o(1),\qquad n_K\to\infty.
\end{equation*}
 %%%%%%%
 \subsection{A further improvement via Stechkin's trick - Proof of Theorem \ref{main}.}\label{proof-main}

We now allow the parameters $h$ and $\eta$ to be non-zero and use Stechkin's trick. We take
\begin{equation}\label{Stechkin:parameters}
(\bsigma,h,\eta,T,T_0,K)=\big((1.0001,1.1053,1.0342),0.001,0.998,0.6,2000,4\big).
\end{equation}
We have
\begin{equation*}
\sigma_1-\sigma_0=0.1052,\qquad \theta(\bsigma)=\frac{\sigma_2-\sigma_0}{\sigma_1-\sigma_0}=0.3241444866\ldots<\frac{1}{3},
\end{equation*}
and hence condition \ref{Optimisation:D}\ref{OptD:Theta} holds.

Using rigorous interval arithmetic, we find
\begin{equation*}
W_\theta(0.336)=4.0290497988957709\ldots\cdot10^{-8}>0.
\end{equation*}
It follows from the properties of $W_\theta$ that $x_\theta<0.336$, and we may therefore take $x_\theta^{\mathrm u}=0.336$. Moreover,
\begin{equation*}
v=\left\lceil\frac{x_\theta^{\mathrm u}}{(\sigma_1-\sigma_0)\log2}\right\rceil=\left\lceil\frac{0.336}{0.1052\log2}\right\rceil=5.
\end{equation*}
Since $K=4$, the finite sum in definition \eqref{S_Kv:} is therefore
\begin{equation*}
S_{20}(t)=\sum_{m=1}^{20}\frac{e^{-\sigma_0mt}\left(1-\eta e^{-hmt}\right)}{m}W_\theta\big((\sigma_1-\sigma_0)mt\big).
\end{equation*}
We then verify condition \ref{Optimisation:D}\ref{OptD:S1} via rigorous interval arithmetic by observing that
\begin{equation*}
S_{20}(t)>0\qquad\text{ for }\quad \log2\leq  t\leq \frac{0.336}{0.1052}=3.1939163\ldots.
\end{equation*}

With respect to Condition \ref{Optimisation:D}\ref{OptD:threshold}, the concerning quantities are
\begin{align*}
J_4(\bsigma;h,\eta)=8.20588\cdot 10^{-6},&\qquad J_6^*(\bsigma;h,\eta)=11.491756932403\ldots,\\
\qquad J_8^*(\bsigma;h,\eta)&=11.795961854036\ldots.
\end{align*}
Taking $T_0=2000$, we obtain
\begin{equation*}
\max\left\{0,\frac{J_6^*(\bsigma;h,\eta)}{T_0^2}+\frac{J_8^*(\bsigma;h,\eta)}{(\sigma_1-\sigma_0)T_0^4}\right\}=2.8729462411578\ldots\cdot10^{-6}<J_4(\bsigma;h,\eta).
\end{equation*}
Consequently, equation \eqref{condition1} gives
\begin{equation*}
\Omega_{\bsigma}(\beta,\gamma;h,\eta)\leq 0\qquad\text{ for }\quad 1/2\leq \beta\leq 1,\quad\gamma\geq 2000.
\end{equation*}
Further, on the remaining compact rectangle, rigorous interval arithmetic gives
\begin{equation*}
\Omega_{\bsigma}(\beta,\gamma;h,\eta)<0\qquad\text{ for }\quad 1/2\leq \beta\leq 1,\quad 0.6\leq \gamma\leq  2000.
\end{equation*}
This verifies condition \ref{Optimisation:D}\ref{OptD:Omega}. Finally, interval arithmetic gives
\begin{equation*}
\Delta_{\mathbb R}(\bsigma;h,\eta)=\left[2.755\cdot10^{-9}\pm 4.46\cdot10^{-13}\right],\qquad \Delta_{\mathbb C}(\bsigma;h,\eta)=\left[1.4\cdot10^{-10}\pm 2.07\cdot10^{-12}\right].
\end{equation*}
Both intervals are strictly positive, and hence condition \ref{Optimisation:D}\ref{OptD:Delta} also holds. 

Therefore the parameters \eqref{Stechkin:parameters} solve Problem \ref{Optimisation:D} with $T=0.6$, and we obtain
\begin{equation*}
h(K)\leq 0.6+o(1),\qquad n_K\to\infty.
\end{equation*}
This proves Theorem \ref{main}.
 
\section{Concluding remarks}\label{numerics}

The numerical verifications in \S\ref{outputs} were performed in SageMath 10.9. All finite decimal parameters were entered as exact rational numbers. We used Sage's built-in symbolic differentiation and interval arithmetic, together with its built-in implementations of $\log\Gamma$ and its derivates. %The required trigamma values were enclosed using their convergent series with explicit remainder bounds.

The compact inequalities for $\Omega_{\bsigma}$ and the inequalities for $S_{Kv}$ were checked by standard interval-subdivision algorithms. The tail conditions \ref{Optimisation:C}\ref{OptC:threshold} and \ref{Optimisation:D}\ref{OptD:threshold} as well as the quantities $\Delta_{\mathbb R}$ and $\Delta_{\mathbb C}$ were evaluated using rigorous interval arithmetic. % The complete SageMath programs are included in the ancillary files.

We note that Stechkin's trick, which we introduced in \S \ref{peck} is optimal in a certain sense --- see for instance the remark in \cite{Stechkin*70}. However, recent work by Bellotti, Yang, and the first author \cite{BellottiTrudgianYang*26}, shows that one may combine this with other ideas to obtain improvements in related problems. We have not explored this, but, given the utility of Stechkin's trick to this problem, we recommend further research on this point.

In \S \ref{horse} we noted that Odlyzko's large survey article \cite{Odlyzko*90} contained three results relevant to the work here. We worked on the third in our Theorem \ref{main}. The methods here are effective, and should be able to recover the second claim, namely, that $\zeta_K$ has a zero below an absolute height $14$ for every $K$ with $n_K\geq 2$. This should possible for large $n_{K}$ (say, $n_{K}\geq 10^{3}$) and then use of the LMFDB \cite{LMFDB} to cover smaller $n_{K}$. We leave this as a task for future research.

\Addresses

\end{document}